\documentclass[12pt,oneside,english]{amsart}
\usepackage[T1]{fontenc}
\usepackage[latin9]{inputenc}
\usepackage{geometry}
\geometry{verbose,tmargin=3cm,bmargin=3cm,lmargin=3cm,rmargin=3cm,footskip=1cm}
\usepackage{amstext}
\usepackage{amsthm}
\usepackage{amssymb}

\makeatletter
\numberwithin{equation}{section}
\numberwithin{figure}{section}
\theoremstyle{plain}
\newtheorem*{thm*}{\protect\theoremname}
\theoremstyle{plain}
\newtheorem{thm}{\protect\theoremname}
\theoremstyle{plain}
\newtheorem{lem}[thm]{\protect\lemmaname}

\DeclareMathOperator{\supp}{supp}
\DeclareMathOperator{\dist}{dist}
\usepackage{hyperref}

\makeatother

\usepackage{babel}
\providecommand{\lemmaname}{Lemma}
\providecommand{\theoremname}{Theorem}

\begin{document}
\title{One-sided $C_{p}$ estimates via $M^{\sharp}$ function}
\author{María Lorente}
\email{m\_lorente@uma.es }
\address{Departamento de Análisis Matemático, Estadística e Investigación Operativa
y Matemática Aplicada. Facultad de Ciencias. Universidad de Málaga
(Málaga, Spain)}
\author{Francisco J. Martín-Reyes}
\email{martin\_reyes@uma.es}
\address{Departamento de Análisis Matemático, Estadística e Investigación Operativa
y Matemática Aplicada. Facultad de Ciencias. Universidad de Málaga
(Málaga, Spain)}
\author{Israel P. Rivera-Ríos}
\email{israelpriverarios@uma.es}
\address{Departamento de Análisis Matemático, Estadística e Investigación Operativa
y Matemática Aplicada. Facultad de Ciencias. Universidad de Málaga
(Málaga, Spain). Departamento de Matemática. Universidad Nacional
del Sur (Bahía Blanca, Argentina).}
\begin{abstract}
We recall that $w\in C_{p}^{+}$ if there exist $\varepsilon>0$ and
$C>0$ such that for any $a<b<c$ with $c-b<b-a$ and any measurable
set $E\subset(a,b)$, the following holds
\[
\int_{E}w\leq C\left(\frac{|E|}{(c-b)}\right)^{\varepsilon}\int_{\mathbb{R}}\left(M^{+}\chi_{(a,c)}\right)^{p}w<\infty.
\]
This condition was introduced by Riveros and de la Torre \cite{RdT}
as a one-sided counterpart of the $C_{p}$ condition studied first
by Muckenhoupt and Sawyer \cite{M,S}. In this paper we show that
given $1<p<q<\infty$ if $w\in C_{q}^{+}$ then
\[
\|M^{+}f\|_{L^{p}(w)}\lesssim\|M^{\sharp,+}f\|_{L^{p}(w)}
\]
and conversely if such an inequality holds, then $w\in C_{p}^{+}.$

This result is the one-sided counterpart of Yabuta's main result in
\cite{Y}. Combining this estimate with known pointwise estimates
for $M^{\sharp,+}$ in the literature we recover and extend the result
for maximal one-sided singular integrals due to Riveros and de la
Torre \cite{RdT} obtaining counterparts a number of operators. 
\end{abstract}

\maketitle

\section{Introduction and Main Result}

One-sided theory of weights was begun by Sawyer in \cite{Smax} where
he provided the characterization of the two weighted inequalities
for the one-sided maximal functions 
\[
M^{+}f(x)=\sup_{h>0}\frac{1}{h}\int_{x}^{x+h}|f(y)|dy\qquad M^{-}f(x)=\sup_{h>0}\frac{1}{h}\int_{x-h}^{x}|f(y)|dy.
\]
In some sense it is a somehow curious fact that the results in \cite{Smax}
appeared more than a decade later than the characterization of the
one weight inequalities for the maximal function due to Muckenhoupt
\cite{Mmax} if one bears in mind that actually the maximal operators
studied by Hardy and Littlewood \cite{HL} were actually $M^{+}$
and $M^{-}$. 

Since Sawyer's work a number of papers such as \cite{AFMR,FMRO,GMROPSdT,LMR,LMR-1,MR,MRO,MROdT,MRdT2,MRPdT,O,OdR}
and even more that we will cite througout this paper were devoted
to develop the one-sided theory. However, at this point, we believe
it is worth mentioning some papers which have expanded the field of
one-sided estimates in the last years. Kinnunen and Saari \cite{KS,KS1}
studied parabolic Muckenhoupt conditions in connection with PDEs and
more recently Hytönen and Rosén devoted their work \cite{HR} to causal
sparse domination motivated by about maximal regularity estimates
for elliptic PDEs, obtaining results related to one-sided weighted
estimates for singular integrals.

A well known estimate in theory of weights that was settled by Coifman
and Fefferman, says that if $w\in A_{\infty}$ then, for every $p\in(0,\infty)$
we have that
\begin{equation}
\|Tf\|_{L^{p}(w)}\leq c_{p,n,w}\|Mf\|_{L^{p}(w)}\label{eq:CF}
\end{equation}
where $T$ stands for any Calderón-Zygmund operator.

Although $w\in A_{\infty}$ is sufficient for (\ref{eq:CF}) to hold,
it turns out to not to be necessary. Muckenhoupt \cite{M}, showed
that if (\ref{eq:CF}) holds for the Hilbert transform for $p>1$
then there exist $c,\varepsilon>0$ such that for every cube $Q$
and every measurable subset $E\subset Q$, 
\[
w(E)\leq c\left(\frac{|E|}{|Q|}\right)^{\varepsilon}\int_{\mathbb{R}}\left(M\chi_{Q}\right)^{p}w,
\]
namely $w\in C_{p}$. Later on, Sawyer \cite{S} showed that if $1<p<q<\infty$
and $w\in C_{q}$, then (\ref{eq:CF}) holds. 

Yabuta \cite{Y} provided a different approach to the question. He
showed that if $1<p<q<\infty$ and $w\in C_{q}$ 
\begin{equation}
\|Mf\|_{L^{p}(w)}\leq c_{p,n,w}\|M^{\sharp}f\|_{L^{p}(w)}\label{eq:Y}
\end{equation}
and also that if such an inequality holds then $w\in C_{p}$. An alternative
proof of this estimate and a slight generalization of the $C_{p}$
condition was studied by Lerner in \cite{L}. 

In the last years some advances have been made in the study of this
kind of questions. Lerner \cite{L2} fully characterized the weak
type version of (\ref{eq:CF}). Sawyer's result has been extended
to the full range in \cite{CLPRR} and also quantitative estimates
in terms of a suitable $C_{p}$ constant and further operators, such
as rough singular integrals, have been explored in \cite{C,CLTR}.

In the one-sided setting we are aware of just one work in this direction
in which Riveros and de la Torre \cite{RdT} introduced the one-sided
version of the $C_{p}$ condition, which reads as follows. We say
that $w\in C_{p}^{+}$ if there exist $\varepsilon>0$ and $C>0$
such that for any $a<b<c$ with $c-b<b-a$ and any measurable set
$E\subset(a,b)$, the following holds
\begin{equation}
\int_{E}w\leq C\left(\frac{|E|}{c-b}\right)^{\varepsilon}\int_{\mathbb{R}}\left(M^{+}\chi_{(a,c)}\right)^{p}w.\label{eq:Cp+}
\end{equation}
The main result in that work was the following one-sided counterpart
of \cite{S}.
\begin{thm*}[{\cite[Theorem 1]{RdT}}]
Let $1<p<q<\infty$. If $w\in C_{q}^{+}$ and $(T^{+})^{*}$ is a
maximal Calderón-Zygmund one-sided singular integral, then 
\begin{equation}
\int_{\mathbb{R}}|(T^{+})^{*}f|^{p}w\leq C\int_{\mathbb{R}}(M^{+}f)^{p}w.\label{eq:OneSidedSawyer}
\end{equation}
\end{thm*}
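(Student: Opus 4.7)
My plan is to deduce the theorem as a corollary of the main result of the paper --- the norm comparison $\|M^+ g\|_{L^p(w)} \lesssim \|M^{\sharp,+} g\|_{L^p(w)}$ valid for $w \in C_q^+$ with $1 < p < q < \infty$ --- combined with a pointwise bound of the form $M^{\sharp,+}((T^+)^* f)(x) \lesssim M^+ f(x)$ drawn from the literature on maximal one-sided Calder\'on--Zygmund operators. This is the one-sided analogue of the classical Alvarez--P\'erez scheme: one sandwiches $(T^+)^* f$ between $M^+((T^+)^* f)$ (pointwise), then $M^{\sharp,+}((T^+)^* f)$ (in $L^p(w)$), and finally $M^+ f$ (pointwise again).

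In more detail, I would proceed in three steps. First, the one-sided Lebesgue differentiation theorem yields $|g(x)| \le M^+ g(x)$ almost everywhere; applying this with $g = (T^+)^* f$ gives
\[
\|(T^+)^* f\|_{L^p(w)} \le \|M^+((T^+)^* f)\|_{L^p(w)}.
\]
Second, since $w \in C_q^+$ and $q > p$, the main theorem of the paper applied to $g = (T^+)^* f$ bounds the right-hand side by a constant multiple of $\|M^{\sharp,+}((T^+)^* f)\|_{L^p(w)}$. Third, the pointwise estimate $M^{\sharp,+}((T^+)^* f)(x) \lesssim M^+ f(x)$ converts the latter into $\|M^+ f\|_{L^p(w)}$, closing the chain.

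The main obstacle is locating and applying the correct version of the pointwise estimate in step three. For $T^+$ itself, such a bound is the one-sided Fefferman--Stein-type sharp function inequality that flows from the one-sided H\"ormander cancellation condition on the kernel of $T^+$; upgrading it to the maximal truncation $(T^+)^*$ requires an additional Cotlar-type argument tailored to the one-sided setting. One also has to verify that no spurious power $M_r^+ f$ with $r > 1$ appears on the right, since such a term could not be absorbed into $M^+ f$ on $L^p(w)$ under the bare hypothesis $w \in C_q^+$. A secondary and essentially routine matter is the a priori finiteness of the quantities involved when invoking the main theorem; this is handled in standard fashion by first restricting to $f$ bounded and compactly supported, for which $(T^+)^* f$ decays sufficiently, and then extending to general $f$ by density and Fatou's lemma.
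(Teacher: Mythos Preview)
Your three-step scheme has the right ingredients --- the main theorem, a pointwise sharp-function bound, and a Cotlar-type input --- but as written there is a gap in Step~3. The estimate $M^{\sharp,+}\bigl((T^{+})^{*}f\bigr)(x)\lesssim M^{+}f(x)$ with the \emph{undotted} sharp maximal function (that is, $\delta=1$) is not available by the standard argument: decomposing $f=f_{1}+f_{2}$ on a testing interval, the local piece produces $\frac{1}{h}\int_{x}^{x+h}(T^{+})^{*}f_{1}$, and since $(T^{+})^{*}$ is only weak~$(1,1)$ this average is controlled by $M_{r}^{+}f$ for some $r>1$ (via $L^{r}$-boundedness) --- precisely the ``spurious power'' you flag but do not eliminate. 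The Cotlar argument you invoke does not by itself upgrade a sharp-function estimate from $T^{+}$ to $(T^{+})^{*}$; it controls $(T^{+})^{*}f$ pointwise by $M_{\delta}^{+}(T^{+}f)+M^{+}f$, which is a different kind of statement.

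The paper resolves exactly this by reordering the ingredients and working with $M_{\delta}^{\sharp,+}$ for $\delta\in(0,1)$. It first applies the one-sided Cotlar inequality $(T^{+})^{*}f\lesssim M_{\delta}^{+}(T^{+}f)+M^{+}f$ (proved in the appendix), then invokes the $\delta$-version of the main theorem (Lemma~\ref{Lem:YabutaDelta}) to pass from $\|M_{\delta}^{+}(T^{+}f)\|_{L^{p}(w)}$ to $\|M_{\delta}^{\sharp,+}(T^{+}f)\|_{L^{p}(w)}$, and finally uses the known pointwise bound $M_{\delta}^{\sharp,+}(T^{+}f)\lesssim M^{+}f$ for $0<\delta<1$ (from \cite{LR}), which \emph{does} avoid the spurious $M_{r}^{+}$ thanks to Kolmogorov's inequality applied to the weak~$(1,1)$ bound. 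Your route can be repaired by replacing $M^{\sharp,+}$ with $M_{\delta}^{\sharp,+}$ throughout and establishing $M_{\delta}^{\sharp,+}\bigl((T^{+})^{*}f\bigr)\lesssim M^{+}f$ directly, but proving that estimate essentially reproduces the Cotlar computation the paper performs up front; the paper's ordering is the cleaner packaging of the same content.
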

Observe that in \cite{RdT}, additionally the authors assume that
the integral in the right hand side of (\ref{eq:Cp+}) is finite.
Note that if the right hand side of the condition $C_{q}^{+}$ is
not finite, then the same happens to $\int_{\mathbb{R}}(M^{+}f)^{p}w$
and hence the inequality is trivial. 

Note that the $C_{p}^{+}$ class is defined in terms of the one-sided
maximal operator $M^{+}$. We will review the definitions of $M^{+}$
and the remainder of the one-sided operators studied in this paper
in section \ref{sec:Preliminaries-and-definitions}. We would like
to observe as well that the assumption $c-b<b-a$ can be dropped.
Assume that $b-a\leq c-b$. Let $\bar{a}<a$ such that $a-\bar{a}=c-a$.
Note, that then, $b\in(\bar{a},c)$, and $c-b<b-\bar{a}.$ On the
other hand observe that $M^{+}\chi_{(\bar{a},c)}\simeq M^{+}\chi_{(a,c)}$
and hence the $C_{p}^{+}$ condition would hold just with a larger
constant $C$ but without the restriction $c-b<b-a$. 

The purpose of this paper is to provide a one-sided counterpart of
Yabuta's characterization (\ref{eq:Y}) and to derive a number of
new results relying upon it. The precise statement of our theorem
is the following.
\begin{thm}
\label{thm:OneSidedYabuta}Let $1<p<q<\infty$. If $f\in L_{c}^{\infty}$
and $w\in C_{q}^{+}$ then

\[
\|M^{+}f\|_{L^{p}(w)}\lesssim\|M^{\sharp,+}f\|_{L^{p}(w)},
\]
provided the left-hand side of the estimate is finite. Conversely
if the preceding estimate holds, then $w\in C_{p}^{+}.$
\end{thm}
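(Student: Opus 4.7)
\textbf{Forward direction.} Fix $\lambda>0$ and consider $\Omega_\lambda=\{M^+f>\lambda\}$, which, since $f\in L_c^\infty$, is an open bounded set. Decompose it into its countably many disjoint connected components $(a_j,b_j)$. The one-sided averaging of $M^+$ forces $M^+f(b_j)\leq \lambda$ at each right endpoint, providing control from the right. Combining this boundary control with the pointwise hypothesis $M^{\sharp,+}f\leq \gamma\lambda$, I would first establish a local Calder\'on--Zygmund type estimate
\[
\bigl|\{x\in(a_j,b_j)\colon M^+f(x)>K\lambda,\ M^{\sharp,+}f(x)\leq\gamma\lambda\}\bigr|\leq C\gamma\,(b_j-a_j),
\]
for a fixed $K>1$ and all sufficiently small $\gamma>0$. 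This is the one-sided counterpart of the key lemma in \cite{Y} and rests on averaging arguments adapted to the right-sided geometry of the components.

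\textbf{Passing to weights via $C_q^+$.} For each component, extend to $\tilde I_j=(a_j,c_j)$ with $c_j-b_j=b_j-a_j$, so the triple $(a_j,b_j,c_j)$ is admissible in (\ref{eq:Cp+}). Applying $C_q^+$ to the set $E_j$ on the left of the previous display and summing over $j$ (using disjointness of the $(a_j,b_j)$ together with the bounded overlap of the two-fold extensions $\tilde I_j$), I obtain a good-$\lambda$ inequality
\[
w\bigl(\{M^+f>K\lambda,\ M^{\sharp,+}f\leq\gamma\lambda\}\bigr)\leq C\gamma^\varepsilon\int_{\mathbb{R}}\bigl(M^+\chi_{\Omega_\lambda^*}\bigr)^q\,w,
\]
where $\Omega_\lambda^*=\bigcup_j\tilde I_j$. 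Multiplying by $p\lambda^{p-1}\,d\lambda$ and integrating in $\lambda$, and splitting $\{M^+f>K\lambda\}$ via the complementary event $\{M^{\sharp,+}f>\gamma\lambda\}$, the small-$\gamma$ piece is to be absorbed into $\|M^+f\|_{L^p(w)}^p$ via a Fubini manipulation, while the complementary piece yields a constant multiple of $\|M^{\sharp,+}f\|_{L^p(w)}^p$. The hypothesis that $\|M^+f\|_{L^p(w)}<\infty$ legitimizes the absorption, and $\gamma$ is then chosen small enough to make the absorbed constant less than $1/2$.

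\textbf{Converse.} Given $a<b<c$ with $c-b<b-a$ and $E\subset(a,b)$, test the assumed norm inequality on $f=\chi_E\in L_c^\infty$. On $E$ one has $M^+\chi_E\geq 1$, whence $w(E)\leq\|M^+\chi_E\|_{L^p(w)}^p$. For the sharp side, a direct computation with the definition of $M^{\sharp,+}$ shows that $M^{\sharp,+}\chi_E$ is supported essentially to the left of $c$ and pointwise dominated there by a constant multiple of $\tfrac{|E|}{c-b}\,M^+\chi_{(a,c)}$. Raising to the $p$-th power and inserting in the norm inequality produces (\ref{eq:Cp+}) with some $\varepsilon>0$.

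\textbf{Main obstacle.} The delicate step is the absorption in the second paragraph: one must carefully manipulate $\int_0^\infty\lambda^{p-1}\int(M^+\chi_{\Omega_\lambda^*})^q\,w\,d\lambda$ via Fubini and dominate it by $\|M^+f\|_{L^p(w)}^p$ up to a constant independent of $f$. This is where $p<q$ is genuinely used and parallels the analogous step in \cite{Y} and \cite{L}, but it must be set up so that the relevant layer-cake manipulations are adapted to the right-sided level sets $\Omega_\lambda$ rather than to a classical Whitney decomposition.
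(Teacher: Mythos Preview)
Your forward direction is essentially the paper's argument. The good-$\lambda$ estimate you state on full components follows by summing the sub-interval version of \cite[Theorem~4]{MRdT} that the paper invokes, and your ``main obstacle'' is exactly the content of Lemma~\ref{lem:2} (borrowed from \cite{RdT}): the inequality
\[
\int_{0}^{\infty}\lambda^{p-1}\sum_{j}\int_{\mathbb{R}}\bigl(M^{+}\chi_{\tilde I_j(\lambda)}\bigr)^{q}w\,d\lambda\;\lesssim\;\int_{\mathbb{R}}(M^{+}f)^{p}w
\]
is precisely $\int M_{p,q}^{+}(M^{+}f)\,w\lesssim\int (M^{+}f)^{p}w$ after dyadic discretization in $\lambda$, and this is where $p<q$ enters. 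So the plan is correct there, if not fully detailed.

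The converse, however, has a genuine gap: the test function $f=\chi_E$ does \emph{not} satisfy the pointwise bound you claim. For almost every $x\in E$ one can choose $h>0$ small with $(x,x+h)$ almost entirely in $E$ and $(x+h,x+2h)$ almost entirely outside $E$ (take $x$ a one-sided density point near the right edge of a density portion of $E$); then the defining quotient for $M^{\sharp,+}\chi_E(x)$ is essentially $1$, regardless of how small $|E|/(c-b)$ is. So $M^{\sharp,+}\chi_E\lesssim \tfrac{|E|}{c-b}\,M^{+}\chi_{(a,c)}$ is false in general, and your argument does not yield \eqref{eq:Cp+}.

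The paper avoids this by testing on the logarithmic function
\[
f(x)=\log^{+}\!\Bigl(\tfrac{|(b,c)|}{|E|}\,M^{-}\chi_E(x)\Bigr)+\chi_{(a,c)}(x),
\]
for which one can show $\|f\|_{BMO^{+}}\lesssim 1$ and hence $M^{\sharp,+}f\lesssim M^{+}\chi_{(a,c)}$, while $f\equiv 1+\log^{+}\!\bigl(\tfrac{|(b,c)|}{|E|}\bigr)$ a.e.\ on $E$. This only produces the \emph{logarithmic} condition
\[
w(E)\lesssim\Bigl(1+\log^{+}\tfrac{|(b,c)|}{|E|}\Bigr)^{-p}\int_{\mathbb{R}}(M^{+}\chi_{(a,c)})^{p}w,
\]
and a separate, nontrivial lemma (Lemma~\ref{lem:equivCp} in the paper, the one-sided analogue of arguments in \cite{M,S}) is then needed to upgrade this to the power-type condition $C_p^{+}$. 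That second step is a substantial part of the converse and is entirely missing from your outline.
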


We would like to note that the corresponding counterpart for $T^{-}$
operators holds as well. However, here and throughout the remainder
of this paper we will just deal with the case of $T^{+}$ operators. 

Exploiting the approach in \cite{CLPRR} we shall derive a number
of consequences of this result. Among them we will recover the result
for one-sided Calderón-Zygmund singular integrals due to de la Torre
and Riveros that we stated above. We present those results in section
\ref{sec:Applications}.

The remainder of the paper is organized as follows. We devote Section
\ref{sec:Preliminaries-and-definitions} to gather some results and
definitions that will be useful throughout the remainder of the work.
In Section \ref{sec:Applications} we present the applications of
Theorem \ref{thm:OneSidedYabuta}, namely counterparts of (\ref{eq:OneSidedSawyer})
for some other one-sided operators, and even for one-sided Calderón-Zygmund
singular integrals themselves. Section \ref{sec:Proof-MT} is devoted
to the proof of Theorem \ref{thm:OneSidedYabuta}. Additionally we
provide an appendix settling a suitable Cotlar inequality that we
have not been able to find in the literature and that will be useful
for us to recover and generalize \cite[Theorem 1]{RdT}.

\section{\label{sec:Preliminaries-and-definitions}Preliminaries and definitions}

We recall that the one-sided maximal function $M^{+}$ is defined,
as we noted in the introduction, as
\[
M^{+}f(x)=\sup_{h>0}\frac{1}{h}\int_{x}^{x+h}|f|
\]
and the sharp maximal function $M^{\sharp,+}$, that was introduced
in \cite{MRdT}, as
\[
M^{\sharp,+}f(x)=\sup_{h>0}\frac{1}{h}\int_{x}^{x+h}\left(f(y)-\frac{1}{h}\int_{x+h}^{x+2h}f\right)^{+}dy.
\]

Another class of operators that we will be dealing with and that have
already appeared in the previous section are one-sided singular integral
operators that were introduced in \cite{AFMR}. We say that a function
$K\in L_{\text{loc}}^{1}(\mathbb{R}\setminus\{0\})$ is a Calderón-Zygmund
kernel if the following properties hold.
\begin{enumerate}
\item There exists a finite constant $B_{1}$ such that 
\[
\left|\int_{\varepsilon<|x|<N}K(x)dx\right|\leq B_{1}
\]
for all $0<\varepsilon<N$. Furthermore, $\lim_{\varepsilon\rightarrow0^{+}}\int_{\varepsilon<|x|<N}K(x)dx$
exists.
\item There exists a constant $B_{2}$ such that
\[
|K(x)|\leq\frac{B_{2}}{|x|}
\]
for all $x\not=0$. 
\item There exists a finite constant $B_{3}$ such that
\begin{equation}
|K(x-y)-K(x)|\leq B_{3}\frac{|y|}{|x|^{2}}\label{eq:HLipschitz}
\end{equation}
for all $x$ and $y$ with $|x|>2|y|>0$.
\end{enumerate}
We say that $T^{+}$ is a one-sided Calderón-Zygmund singular integral
if 
\begin{equation}
T^{+}f(x)=\lim_{\varepsilon\rightarrow0}\int_{x+\varepsilon}^{\infty}K(x-y)f(y)dy\label{eq:defT+}
\end{equation}
where $K$ is a Calderón-Zygmund kernel with support in $\mathbb{R}^{-}$. 

We would like to emphasize that this kind of operators are Calderón-Zygmund
operators, and hence they have all the usual properties of operators
in that class, but with the extra feature that $K$ is supported in
$\mathbb{R}^{-}$. Examples of such operators are provided in \cite{AFMR}.

Replacing (\ref{eq:HLipschitz}) by some other smoothness conditions
we obtain some more operators. For instance, we may assume that there
exist numbers $c_{r},C_{r}>0$ such that for any $y\in\mathbb{R}$
and $R>c_{r}|y|$, 
\begin{equation}
\sum_{m=1}^{\infty}2^{m}R\left(\frac{1}{2^{m}R}\int_{2^{m}<|x|\leq2^{m+1}R}|K(x-y)-K(x)|^{r}dx\right)^{\frac{1}{r}}\leq C_{r}\label{eq:LrHormander}
\end{equation}
if $1\leq r<\infty$ and
\[
\sum_{m=1}^{\infty}2^{m}R\sup_{2^{m}<|x|\leq2^{m+1}R}|K(x-y)-K(x)|\leq C_{\infty}
\]
if $r=\infty$. If $K$ satisfies an $L^{r}$-Hörmander condition
we say that $K\in\mathcal{H}_{r}$. This yields that we may define
an operator $T^{+}$ exactly as we did in (\ref{eq:defT+}), but with
$K$ satisfying (\ref{eq:LrHormander}) instead of (\ref{eq:HLipschitz}).
We may go even further. Let us recall first the notion of Orlicz average.
Let $A:[0,\infty)\rightarrow[0,\infty)$ a Young function, namely
a convex function such that $A(0)=0$, $A(1)=1$ and $\lim_{t\rightarrow\infty}A(t)=\infty$.
Given a measurable set $E$ we define the average of $f$ over $E$
with respect to $A$ as 
\[
\|f\|_{A,E}=\inf\left\{ \lambda>0\,:\,\frac{1}{|E|}\int_{E}A\left(\frac{|f(x)|}{\lambda}\right)dx\leq1\right\} .
\]
Relying upon that definition we may define the maximal function $M_{A}^{+}$
as follows
\[
M_{A}^{+}f(x)=\sup_{h>0}\|f\|_{A,[x,x+h]}.
\]
It is also worth mentioning that we can define a function associated
to $A$, that we call $\overline{A}$, which turns out to be a Young
function as well and satisfies the following inequalities 
\[
t\leq\overline{A}^{-1}(t)A^{-1}(t)\leq2t.
\]
Furthermore it can be shown that if $A_{1},A_{2},\dots,A_{n}$ are
Young functions such that 
\[
A_{1}^{-1}(t)\dots A_{n}^{-1}(t)\lesssim t
\]
then
\[
\frac{1}{|E|}\int_{E}|f_{1}\dots f_{n}|\lesssim\|f_{1}\|_{A_{1},E}\dots\|f_{n}\|_{A_{n},E}.
\]
Coming back to the previous discussion, as we mentioned above, we
may define a class of kernels generalizing the $L^{r}$-Hörmander
condition. Given a Young function $A$ we say that it satisfies an
$L^{A}$-Hörmander condition if there exist $c_{A},C_{A}\geq1$ such
that for any $x\in\mathbb{R}$ and $R>c_{A}|x|$
\begin{equation}
\sum_{m=1}^{\infty}2^{m}R\|(K(x-\cdot)-K(-\cdot))\chi_{2^{m}R<|\cdot|\leq2^{m+1}R}\|_{A,B(0,2^{m+1}R)}\leq C_{A}.\label{eq:AHormander}
\end{equation}
If $K$ satisfies this condition we say that $K\in\mathcal{H}_{A}$.
In order to be able to deal with commutators we introduce another
condition. We say that $K\in\mathcal{H}_{A,k}$ if there exist $c_{A,k},C_{A,k}\geq1$
such that for any $x\in\mathbb{R}$ and $R>c_{A,k}|x|$
\[
\sum_{m=1}^{\infty}2^{m}Rm^{k}\|(K(x-\cdot)-K(-\cdot))\chi_{2^{m}R<|\cdot|\leq2^{m+1}R}\|_{A,B(0,2^{m+1}R)}\leq C_{A,k}.
\]

In both cases, whether $K\in\mathcal{H}_{A}$ or $K\in\mathcal{H}_{A,k}$,
we may define a singular integral operator exactly as we did in (\ref{eq:defT+}).
Those classes of kernels were introduced and studied in \cite{LRdT1,LMRdT}.

We would like to end recalling that we may define a maximal version
of any of the singular integral operators that we have just presented
in this section as follows 
\[
(T^{+})^{*}f(x)=\sup_{\varepsilon>0}\left|T_{\varepsilon}^{+}f(x)\right|=\sup_{\varepsilon>0}\left|\int_{\varepsilon+x}^{\infty}K(x-y)f(y)dy\right|.
\]

\section{\label{sec:Applications}Corollaries of the main theorem}

As we announced in the previous section we will derive a number of
applications of Theorem \ref{thm:OneSidedYabuta}. Our use of that
theorem will rely upon the following Lemma.
\begin{lem}
\label{Lem:YabutaDelta}Let $0<p<\infty$. If $\delta\in(0,p)$ then,
if $w\in C_{\rho}^{+}$ with $\rho>\frac{p}{\delta}$, we have that
\[
\|M_{\delta}^{+}f\|_{L^{p}(w)}\lesssim\|M_{\delta}^{\sharp,+}f\|_{L^{p}(w)}
\]
where $M_{\delta}^{+}(f)=\left(M^{+}(|f|^{\delta})\right)^{\frac{1}{\delta}}$
and $M_{\delta}^{\sharp,+}(f)=\left(M^{\sharp,+}(|f|^{\delta})\right)^{\frac{1}{\delta}}$.
\end{lem}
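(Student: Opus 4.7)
The plan is a one-line rescaling reduction to Theorem \ref{thm:OneSidedYabuta}. Set $g=|f|^{\delta}$ and $\tilde{p}=p/\delta$. The hypothesis $\delta\in(0,p)$ gives $\tilde{p}>1$, and $\rho>p/\delta$ gives $\tilde{p}<\rho$, so the triple $(\tilde{p},\rho,g)$ satisfies the hypotheses of Theorem \ref{thm:OneSidedYabuta} (with $g\in L_{c}^{\infty}$ inherited from $f\in L_{c}^{\infty}$, which is the standing class in that theorem).

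Applying Theorem \ref{thm:OneSidedYabuta} to $g$ with exponents $\tilde{p}<\rho$ yields
\[
\|M^{+}g\|_{L^{\tilde{p}}(w)}\lesssim\|M^{\sharp,+}g\|_{L^{\tilde{p}}(w)}.
\]
Raising both sides to the power $1/\delta$ and unwinding the definitions of the weighted norms gives
\[
\|M^{+}g\|_{L^{\tilde{p}}(w)}^{1/\delta}=\left(\int_{\mathbb{R}}\bigl(M^{+}(|f|^{\delta})\bigr)^{p/\delta}w\right)^{1/p}=\|M_{\delta}^{+}f\|_{L^{p}(w)},
\]
and analogously for the sharp side, so the desired inequality follows immediately.

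The only potential obstacle is the finiteness assumption: Theorem \ref{thm:OneSidedYabuta} requires $\|M^{+}g\|_{L^{\tilde{p}}(w)}<\infty$, which after rescaling is exactly the finiteness of $\|M_{\delta}^{+}f\|_{L^{p}(w)}$. As in the main theorem, this is either taken as a standing qualitative hypothesis on the input $f$ (a compactly supported bounded function suffices, as $M^{+}$ applied to such a function has the mild decay needed to place it in $L^{\tilde p}(w)$ under a $C_{\rho}^{+}$ weight) or handled by a standard truncation argument passing to a sequence $f_{N}\to f$ and invoking monotone convergence, which does not affect the rescaling step.
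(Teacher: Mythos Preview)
Your proof is correct and is essentially identical to the paper's own argument: both set $g=|f|^{\delta}$, note that $p/\delta>1$ and $\rho>p/\delta$, apply Theorem~\ref{thm:OneSidedYabuta} with exponent $p/\delta$, and then unwind the definitions of $M_{\delta}^{+}$ and $M_{\delta}^{\sharp,+}$. If anything, your version is slightly more careful in flagging the finiteness hypothesis and the $L_{c}^{\infty}$ requirement on $g$, which the paper leaves implicit.
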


\begin{proof}
Observe that, since $\delta\in(0,p)$ we have that $\frac{p}{\delta}>1$.
Taking that into account and the fact that $w\in C_{\rho}^{+}$ we
have by Theorem \ref{thm:OneSidedYabuta} that 
\begin{align*}
\|M_{\delta}^{+}f\|_{L^{p}(w)}^{p} & =\int_{\mathbb{R}}M^{+}(|f|^{\delta})^{\frac{p}{\delta}}(x)w(x)dx\\
 & \lesssim\int_{\mathbb{R}}M^{\sharp,+}(|f|^{\delta})^{\frac{p}{\delta}}(x)w(x)dx\\
 & =\|M_{\delta}^{\sharp,+}f\|_{L^{p}(w)}^{p}
\end{align*}
and we are done.
\end{proof}

\subsection{Singular integral operators, $L^{A}$-Hörmander operators and their
commutators}

In this section we present our results for singular integral operators,
$A$-Hörmander operators and their commutators. We will provide some
full arguments here, that we shall omit for the remainder of the corollaries
since they will be analogous to the ones provided here.

We begin recalling that for one-sided Calderón-Zygmund singular integrals
$T^{+}$ it was shown in \cite[Lemma 1]{LR} that for $0<\delta<1$,
\begin{equation}
M_{\delta}^{\sharp,+}(T^{+}f)\lesssim M^{+}f.\label{eq:MsharpCZO}
\end{equation}
Using that estimate we can derive the following result. 
\begin{thm}
\label{thm:CZO}Let $0<p<\infty$ and $\varepsilon>0$ and assume
that $w\in C_{\max\{p,1\}+\varepsilon}^{+}.$ Then 

\[
\|T^{+}f\|_{L^{p}(w)}\lesssim\|M^{+}f\|_{L^{p}(w)}.
\]
\end{thm}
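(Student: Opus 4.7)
The plan is to combine the pointwise sharp-function estimate (\ref{eq:MsharpCZO}) with Lemma \ref{Lem:YabutaDelta} in a three-step sandwich: a pointwise domination of $T^+f$ by $M_\delta^+(T^+f)$, an application of the one-sided Yabuta-type inequality at the right level $\delta$, and finally the pointwise sharp estimate for the maximal of $T^+f$.

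First I would select an exponent $\delta$ to feed into Lemma \ref{Lem:YabutaDelta}. Since (\ref{eq:MsharpCZO}) requires $0<\delta<1$, and the lemma requires $\delta\in(0,p)$ together with $\rho=\max\{p,1\}+\varepsilon>p/\delta$, the constraint on $\delta$ is
\[
\frac{p}{\max\{p,1\}+\varepsilon}<\delta<\min\{1,p\}.
\]
This interval is non-empty in both cases: if $p\geq 1$ the upper bound is $1$ and the lower bound is $p/(p+\varepsilon)<1$; if $p<1$ the upper bound is $p$ and the lower bound is $p/(1+\varepsilon)<p$.

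With such a $\delta$ fixed, the Lebesgue differentiation theorem gives $|T^+f|\leq M_\delta^+(T^+f)$ a.e., so
\[
\|T^+f\|_{L^p(w)}\leq \|M_\delta^+(T^+f)\|_{L^p(w)}.
\]
Lemma \ref{Lem:YabutaDelta} applied with $\rho=\max\{p,1\}+\varepsilon$ yields
\[
\|M_\delta^+(T^+f)\|_{L^p(w)}\lesssim \|M_\delta^{\sharp,+}(T^+f)\|_{L^p(w)},
\]
and (\ref{eq:MsharpCZO}) closes the chain via $\|M_\delta^{\sharp,+}(T^+f)\|_{L^p(w)}\lesssim \|M^+f\|_{L^p(w)}$.

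The main obstacle is the finiteness hypothesis inherited from Theorem \ref{thm:OneSidedYabuta}: Lemma \ref{Lem:YabutaDelta} only delivers the middle inequality once one knows that $\|M_\delta^+(T^+f)\|_{L^p(w)}<\infty$, and Theorem \ref{thm:OneSidedYabuta} itself is stated for $f\in L_c^\infty$. So a standard approximation step is needed: one may assume $\|M^+f\|_{L^p(w)}<\infty$ (otherwise the claim is trivial), establish the estimate first for $f\in L_c^\infty$ (where $T^+f$ decays enough at infinity for the left-hand side to be finite), and then pass to the general case by truncating $f$ to $f_N=f\chi_{[-N,N]}$ and invoking monotone convergence together with the already-proved uniform bound.
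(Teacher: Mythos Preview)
Your argument is correct and follows exactly the same route as the paper: pick $\delta\in(0,1)$ with $1<p/\delta<\max\{p,1\}+\varepsilon$, then chain $\|T^{+}f\|_{L^{p}(w)}\lesssim\|M_{\delta}^{+}(T^{+}f)\|_{L^{p}(w)}\lesssim\|M_{\delta}^{\sharp,+}(T^{+}f)\|_{L^{p}(w)}\lesssim\|M^{+}f\|_{L^{p}(w)}$ via Lemma~\ref{Lem:YabutaDelta} and (\ref{eq:MsharpCZO}). You add the explicit check that such a $\delta$ exists and a remark on the finiteness hypothesis and approximation by $L_c^\infty$ functions, both of which the paper leaves implicit.
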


\begin{proof}
Let $\delta\in(0,1)$ such that $1<\frac{p}{\delta}<\max\{p,1\}+\varepsilon$.
For that choice of $\delta$, taking into account (\ref{eq:MsharpCZO})
and Lemma \ref{Lem:YabutaDelta} with $\rho=\max\{p,1\}+\varepsilon$,
we have that
\begin{align*}
\|T^{+}f\|_{L^{p}(w)} & \lesssim\|M_{\delta}^{+}(T^{+}f)\|_{L^{p}(w)}\lesssim\|M_{\delta}^{\sharp,+}(T^{+}f)\|_{L^{p}(w)}\\
 & \leq\|M^{+}f\|_{L^{p}(w)}
\end{align*}
and we are done.
\end{proof}
Recall that the commutator of a linear operator $T$ and a locally
integrable function $b$ is defined as 
\[
[b,T]f(x)=b(x)Tf(x)-T(bf)(x).
\]
The iterated commutator $T_{b}^{k}$ consists precisely in iterating
the commutator.
\[
T_{b}^{k}f(x)=[b,T_{b}^{k-1}]f(x)
\]
where $T_{b}^{0}f(x)=Tf(x)$. For the commutator and the iterated
commutator of $b\in BMO$ and a Calderón-Zygmund one-sided singular
integral $T^{+}$ again in \cite[Lemma 1]{LR}, it was shown that
for $0<\delta<\gamma<1$, we have that
\begin{equation}
M_{\delta}^{\sharp,+}((T^{+})_{b}^{k}f)(x)\lesssim\sum_{j=0}^{k-1}\|b\|_{BMO}^{k-j}M_{\gamma}((T^{+})_{b}^{j})f(x)+\|b\|_{BMO}^{k}(M^{+})^{k+1}f(x).\label{eq:MsharpComm}
\end{equation}
Relying upon that pointwise estimate we have the following Theorem.
\begin{thm}
\label{thm:comm}Let $0<p<\infty$ and $\varepsilon>0$ and assume
that $w\in C_{\max\{p,1\}+\varepsilon}^{+}.$ Then 
\[
\|(T^{+})_{b}^{k}f\|_{L^{p}(w)}\lesssim\|b\|_{BMO}^{k}\|(M^{+})^{k+1}f\|_{L^{p}(w)}.
\]
\end{thm}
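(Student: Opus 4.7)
I would proceed by induction on the commutator order $k$, with the base case $k=0$ being literally Theorem \ref{thm:CZO} since $(T^{+})_{b}^{0}=T^{+}$. All the work is thus in the inductive step.

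To make the induction close cleanly I would strengthen the claim to assert, for every $\delta$ in a suitable nonempty range, the inequality
\[
\|M_{\delta}^{+}((T^{+})_{b}^{k}f)\|_{L^{p}(w)}\lesssim\|b\|_{BMO}^{k}\|(M^{+})^{k+1}f\|_{L^{p}(w)}.
\]
The original statement is then recovered from the pointwise bound $|g|\leq M_{\delta}^{+}g$ (valid since $M^{+}(|g|^{\delta})(x)\geq|g(x)|^{\delta}$ a.e.). Writing $\rho=\max\{p,1\}+\varepsilon$, the admissible range is $\delta\in(p/\rho,\min(p,1))$, which is nonempty precisely because $\rho>\max\{p,1\}$; next I would also fix $\gamma\in(\delta,\min(p,1))$. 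These two choices simultaneously make Lemma \ref{Lem:YabutaDelta} applicable with both exponents $\delta$ and $\gamma$ (since $p/\gamma<p/\delta<\rho$) and place $\delta,\gamma$ in the configuration $0<\delta<\gamma<1$ required by the pointwise bound (\ref{eq:MsharpComm}).

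The inductive step is then a short three-line chain. Lemma \ref{Lem:YabutaDelta} passes from $M_{\delta}^{+}$ to $M_{\delta}^{\sharp,+}$; the pointwise bound (\ref{eq:MsharpComm}) decomposes $M_{\delta}^{\sharp,+}((T^{+})_{b}^{k}f)$ into $\sum_{j=0}^{k-1}\|b\|_{BMO}^{k-j}M_{\gamma}^{+}((T^{+})_{b}^{j}f)+\|b\|_{BMO}^{k}(M^{+})^{k+1}f$; and the strengthened inductive hypothesis, applied with $\gamma$ in place of $\delta$ (which is legitimate since $\gamma$ lies in the same admissible range), bounds each $\|M_{\gamma}^{+}((T^{+})_{b}^{j}f)\|_{L^{p}(w)}$ by $\|b\|_{BMO}^{j}\|(M^{+})^{j+1}f\|_{L^{p}(w)}\leq\|b\|_{BMO}^{j}\|(M^{+})^{k+1}f\|_{L^{p}(w)}$. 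Summing the binomial-type terms collapses everything to the required bound.

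I expect the main obstacle to be not the structural argument, which is essentially formal once the parameter ranges fit, but the finiteness of $\|M_{\delta}^{+}((T^{+})_{b}^{k}f)\|_{L^{p}(w)}$ that is implicitly required to invoke Lemma \ref{Lem:YabutaDelta} (inherited from the corresponding clause in Theorem \ref{thm:OneSidedYabuta}). As in the brief proof of Theorem \ref{thm:CZO}, I would dispatch this by the standard route: restrict to $f\in L_{c}^{\infty}$ and to suitable truncations of $(T^{+})_{b}^{k}$, verify the estimate there with uniform constants, and then pass to the limit by monotone or dominated convergence.
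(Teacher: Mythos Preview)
Your proposal is correct and follows essentially the same route as the paper: strengthen the claim to an estimate for $\|M_{\delta}^{+}((T^{+})_{b}^{k}f)\|_{L^{p}(w)}$ with $\delta$ chosen so that Lemma \ref{Lem:YabutaDelta} applies, then induct using the pointwise bound (\ref{eq:MsharpComm}) together with a second parameter $\gamma\in(\delta,1)$ in the same admissible range. The only cosmetic difference is that the paper starts the induction at $k=1$ (invoking the argument of Theorem \ref{thm:CZO} inside the base case) whereas you start at $k=0$; your remark about the finiteness hypothesis needed for Lemma \ref{Lem:YabutaDelta} is a point the paper leaves implicit.
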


\begin{proof}
Observe that it suffices to show that for $\delta_{1}\in(0,1)$ such
that $1<\frac{p}{\delta_{1}}<\max\{p,1\}+\varepsilon$ the following
holds
\begin{equation}
\|M_{\delta_{1}}^{+}(T^{+})_{b}^{k}f\|_{L^{p}(w)}\lesssim\|b\|_{BMO}^{k}\|(M^{+})^{k+1}f\|_{L^{p}(w)}.\label{eq:HI}
\end{equation}
We proceed by induction. Assume first that $k=1$. Let $0<\delta_{1}<\delta_{2}<1$
such that $1<\frac{p}{\delta_{i}}<\max\{p,1\}+\varepsilon$. We have
that, taking into account (\ref{eq:MsharpComm}) and Lemma \ref{Lem:YabutaDelta},
\begin{align*}
\|M_{\delta_{1}}^{+}(T^{+})_{b}^{1}f\|_{L^{p}(w)} & \lesssim\left\Vert M_{\delta_{2}}^{\sharp,+}\left((T^{+})_{b}^{1}f\right)\right\Vert _{L^{p}(w)}\\
 & \lesssim\|b\|_{BMO}\|M_{\delta_{2}}^{+}(T^{+}f)\|_{L^{p}(w)}+\|b\|_{BMO}\|(M^{+})^{2}f\|_{L^{p}(w)}\\
 & \lesssim\|b\|_{BMO}\|M^{+}f\|_{L^{p}(w)}+\|b\|_{BMO}\|(M^{+})^{2}f\|_{L^{p}(w)}\\
 & \lesssim\|b\|_{BMO}\|(M^{+})^{2}f\|_{L^{p}(w)}
\end{align*}
where the estimate for $\|M_{\delta_{2}}(T^{+}f)\|_{L^{p}(w)}$ follows
by the same argument provided in the proof of Theorem \ref{thm:CZO}.

Assume now that (\ref{eq:HI}) holds for $1,2,\dots k-1$. Let $0<\delta_{1}<\delta_{2}<1$
such that $1<\frac{p}{\delta_{i}}<\max\{p,1\}+\varepsilon$. Then
, again by (\ref{eq:MsharpComm}) and Lemma \ref{Lem:YabutaDelta},
\begin{align*}
\|M_{\delta_{1}}^{+}(T^{+})_{b}^{k}f\|_{L^{p}(w)} & \lesssim\left\Vert M_{\delta_{1}}^{\sharp,+}\left((T^{+})_{b}^{k}f\right)\right\Vert _{L^{p}(w)}\\
 & \lesssim\sum_{j=0}^{k-1}\|b\|_{BMO}^{k-j}\|M_{\delta_{2}}^{+}((T^{+})_{b}^{j})f(x)\|_{L^{p}(w)}+\|b\|_{BMO}^{k}\|(M^{+})^{k+1}f\|_{L^{p}(w)}.\\
 & \lesssim\sum_{j=0}^{k-1}\|b\|_{BMO}^{k}\|(M^{+})^{j}f(x)\|_{L^{p}(w)}+\|b\|_{BMO}^{k}\|(M^{+})^{k+1}f\|_{L^{p}(w)}\\
 & \lesssim\|b\|_{BMO}^{k}\|(M^{+})^{k+1}f\|_{L^{p}(w)}
\end{align*}
and we are done.
\end{proof}
With analogous arguments relying upon the corresponding pointwise
sharp inequality we may settle the following result. We recall that
if $\bar{A}$ is a Young function and $T^{+}$ is an operator associated
to a kernel $K\in H_{\bar{A}}$ with support in $(-\infty,0)$, then
by 
\[
M_{\delta}^{\sharp,+}((T^{+}f)(x)\lesssim M_{A}^{+}f(x)
\]
and if, $A$ and $B$ are Young functions, $\overline{C}^{-1}(t)=e^{t^{\frac{1}{k}}}$
with $k$ a positive integer such that $A^{-1}(t)B^{-1}(t)\overline{C}^{-1}(t)\leq t$
for $t\geq1$ and $K\in H_{B}\cap H_{\bar{A},k}$ then, for $0<\delta<\gamma<1$,
\[
M_{\delta}^{\sharp,+}((T^{+})_{b}^{k}f)(x)\lesssim\sum_{j=0}^{k-1}\|b\|_{BMO}^{k-j}M_{\gamma}^{+}((T^{+})_{b}^{j})f(x)+\|b\|_{BMO}^{k}M_{A}^{+}f(x).
\]

Arguing as above, we have the following results.
\begin{thm}
Let $A$ be a Young function and assume that $K\in\mathcal{H}_{\overline{A}}$
. Let $0<p<\infty$ and $\varepsilon>0$ and assume that $w\in C_{\max\{p,1\}+\varepsilon}^{+}.$
Then
\[
\|T^{+}f\|_{L^{p}(w)}\lesssim\|M_{A}^{+}f\|_{L^{p}(w)}.
\]
\end{thm}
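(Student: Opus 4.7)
The plan is to mimic the proof of Theorem \ref{thm:CZO} almost verbatim, replacing the pointwise sharp-function estimate (\ref{eq:MsharpCZO}) by its Orlicz analogue. The excerpt states (just before the theorem) that whenever $K\in\mathcal{H}_{\overline{A}}$ we have
\[
M_{\delta}^{\sharp,+}(T^{+}f)(x)\lesssim M_{A}^{+}f(x)
\]
for $0<\delta<1$. With this in hand, the argument is the expected three-step sandwich: pointwise control of $T^{+}f$ by $M_{\delta}^{+}(T^{+}f)$, one-sided Yabuta (Lemma \ref{Lem:YabutaDelta}) to pass to the sharp side, and finally the above $L^{A}$-H\"ormander pointwise estimate.

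In more detail, I would first fix $\delta\in(0,1)$ satisfying $1<\frac{p}{\delta}<\max\{p,1\}+\varepsilon$; such a $\delta$ exists because when $p\geq 1$ one can take any $\delta\in\bigl(\tfrac{p}{p+\varepsilon},1\bigr)$, and when $p<1$ any $\delta\in\bigl(\tfrac{p}{1+\varepsilon},p\bigr)$ works. By one-sided Lebesgue differentiation, $M^{+}g\geq |g|$ a.e., and hence $M_{\delta}^{+}(T^{+}f)\geq |T^{+}f|$ a.e. Combining this with Lemma \ref{Lem:YabutaDelta} applied with $\rho=\max\{p,1\}+\varepsilon$ (which by choice of $\delta$ satisfies $\rho>\frac{p}{\delta}$) and the pointwise estimate above, one obtains
\[
\|T^{+}f\|_{L^{p}(w)}\leq \|M_{\delta}^{+}(T^{+}f)\|_{L^{p}(w)}\lesssim \|M_{\delta}^{\sharp,+}(T^{+}f)\|_{L^{p}(w)}\lesssim \|M_{A}^{+}f\|_{L^{p}(w)},
\]
which is the desired conclusion.

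There is essentially no new obstacle here: every nontrivial ingredient has been quoted or proved earlier in the paper. The only mild care needed is the standard qualitative assumption that $f$ is nice enough (say $f\in L_{c}^{\infty}$) so that $T^{+}f$ is well defined and Lemma \ref{Lem:YabutaDelta}, inherited from Theorem \ref{thm:OneSidedYabuta}, is applicable to $T^{+}f$ in place of a generic function; the final estimate is then extended to the desired class of $f$ by the usual density/monotone-convergence argument, and the implicit constant depends on the $L^{A}$-H\"ormander constants of $K$, on $p$, $\varepsilon$, and on the $C_{\max\{p,1\}+\varepsilon}^{+}$ constant of $w$, exactly as in the proof of Theorem \ref{thm:CZO}.
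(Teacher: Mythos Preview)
Your argument is correct and follows exactly the approach the paper intends: it explicitly says ``Arguing as above'' (i.e., as in the proof of Theorem~\ref{thm:CZO}) with the Orlicz pointwise estimate $M_{\delta}^{\sharp,+}(T^{+}f)\lesssim M_{A}^{+}f$ in place of (\ref{eq:MsharpCZO}). Your choice of $\delta$ and application of Lemma~\ref{Lem:YabutaDelta} match the paper's scheme verbatim.
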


\begin{thm}
Let $k$ be a positive integer and assume that $A$ and $B$ are Young
functions, and $A^{-1}(t)B^{-1}(t)\overline{C}^{-1}(t)\leq t$ for
$t\geq1$ where $\overline{C}^{-1}(t)=e^{t^{\frac{1}{k}}}$ . Assume
also that $K\in\mathcal{H}_{B}\cap\mathcal{H}_{\bar{A},k}$ and that
$b\in BMO$. Then, if $0<p<\infty$, $\varepsilon>0$ and $w\in C_{\max\{p,1\}+\varepsilon}^{+}$,
we have that
\[
\|(T^{+})_{b}^{k}f\|_{L^{p}(w)}\lesssim\|b\|_{BMO}^{k}\|M_{A}^{+}f\|_{L^{p}(w)}.
\]
\end{thm}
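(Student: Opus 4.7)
The plan is to mimic the inductive strategy of the proof of Theorem \ref{thm:comm}, substituting the role of (\ref{eq:MsharpComm}) by the pointwise sharp bound displayed immediately above the statement, namely
\[
M_{\delta}^{\sharp,+}((T^{+})_{b}^{k}f)(x)\lesssim\sum_{j=0}^{k-1}\|b\|_{BMO}^{k-j}M_{\gamma}^{+}((T^{+})_{b}^{j}f)(x)+\|b\|_{BMO}^{k}M_{A}^{+}f(x),
\]
valid for $0<\delta<\gamma<1$ under the hypothesis $K\in\mathcal{H}_{B}\cap\mathcal{H}_{\bar{A},k}$. The role that the previous theorem (bounding $T^{+}$ by $M_{A}^{+}$ for $K\in\mathcal{H}_{\bar{A}}$) plays here is analogous to the role Theorem \ref{thm:CZO} played in Theorem \ref{thm:comm}: it provides the base case $j=0$ of the induction, since $\mathcal{H}_{B}\cap\mathcal{H}_{\bar{A},k}\subset\mathcal{H}_{\bar{A}}$.

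I would reduce, as in the proof of Theorem \ref{thm:comm}, to establishing
\[
\|M_{\delta_{1}}^{+}(T^{+})_{b}^{k}f\|_{L^{p}(w)}\lesssim\|b\|_{BMO}^{k}\|M_{A}^{+}f\|_{L^{p}(w)}
\]
for some (equivalently, every) $\delta_{1}\in(0,1)$ with $1<p/\delta_{1}<\max\{p,1\}+\varepsilon$; existence of such $\delta_{1}$ is a routine check splitting $p\geq1$ and $p<1$. Picking $0<\delta_{1}<\delta_{2}<\gamma<1$ with $1<p/\delta_{i}<\max\{p,1\}+\varepsilon$ and applying Lemma \ref{Lem:YabutaDelta} with $\rho=\max\{p,1\}+\varepsilon$, followed by the displayed pointwise estimate, yields
\begin{align*}
\|M_{\delta_{1}}^{+}(T^{+})_{b}^{k}f\|_{L^{p}(w)} & \lesssim\|M_{\delta_{1}}^{\sharp,+}((T^{+})_{b}^{k}f)\|_{L^{p}(w)}\\
 & \lesssim\sum_{j=0}^{k-1}\|b\|_{BMO}^{k-j}\|M_{\gamma}^{+}((T^{+})_{b}^{j}f)\|_{L^{p}(w)}+\|b\|_{BMO}^{k}\|M_{A}^{+}f\|_{L^{p}(w)}.
\end{align*}
Reading the induction hypothesis at the level of $M_{\gamma}^{+}((T^{+})_{b}^{j}f)$ (as is done in Theorem \ref{thm:comm}, where the IH is interpreted as valid for every admissible $\delta$), each summand is controlled by $\|b\|_{BMO}^{j}\|M_{A}^{+}f\|_{L^{p}(w)}$, the case $j=0$ being exactly the previous theorem. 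Multiplying by $\|b\|_{BMO}^{k-j}$ and collapsing the sum gives the target bound $\|b\|_{BMO}^{k}\|M_{A}^{+}f\|_{L^{p}(w)}$.

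The main obstacle I anticipate is not the scheme itself, which is essentially mechanical once the sharp maximal estimate is in hand, but rather the finiteness issue inherited from Theorem \ref{thm:OneSidedYabuta} through Lemma \ref{Lem:YabutaDelta}: one has to justify that $\|M_{\delta_{1}}^{+}(T^{+})_{b}^{k}f\|_{L^{p}(w)}$ is finite before the sharp inequality can be invoked. For $f\in L_{c}^{\infty}$ this can be verified by exploiting the decay of $(T^{+})_{b}^{k}f$ away from $\operatorname{supp} f$ together with local boundedness properties of $(T^{+})_{b}^{k}$, and the general case is then reached by the standard truncation/approximation procedure used throughout one-sided $C_{p}$ theory. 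Apart from this technicality, the proof is a faithful transcription of the commutator argument of Theorem \ref{thm:comm}.
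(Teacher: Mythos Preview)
Your proposal is correct and follows essentially the same approach as the paper, which simply says ``Arguing as above, we have the following results'' and defers to the induction scheme of Theorem \ref{thm:comm} combined with the displayed pointwise $M_{\delta}^{\sharp,+}$ estimate. Your observation that $\mathcal{H}_{\bar{A},k}\subset\mathcal{H}_{\bar{A}}$ (so the preceding theorem supplies the base case $j=0$) and your flagging of the a priori finiteness needed to invoke Lemma \ref{Lem:YabutaDelta} are both on point; the paper leaves these implicit.
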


We would like to end the section providing a result for maximal singular
integral operators.
\begin{thm}
\label{thm:Max}Let $A$ be a Young function and assume that $K\in\mathcal{H}_{\overline{A}}$
. Let $0<p<\infty$ and $\varepsilon>0$ and assume that $w\in C_{\max\{p,1\}+\varepsilon}^{+}.$
Then
\[
\|(T^{+})^{*}f\|_{L^{p}(w)}\lesssim\|M_{A}^{+}f\|_{L^{p}(w)}.
\]
\end{thm}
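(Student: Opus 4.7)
The plan is to follow the template of Theorem \ref{thm:CZO} and its variants, namely: first produce a pointwise sharp-maximal estimate
\[
M_{\delta}^{\sharp,+}((T^{+})^{*}f)(x)\lesssim M_{A}^{+}f(x),\qquad 0<\delta<1,
\]
and then transfer it to a weighted $L^{p}$ estimate by choosing $\delta$ suitably small and invoking Lemma \ref{Lem:YabutaDelta}. Only the first step is genuinely new; the second is a verbatim replay of the argument that proved Theorem \ref{thm:CZO}, with $T^{+}$ replaced by $(T^{+})^{*}$ and $M^{+}f$ replaced by $M_{A}^{+}f$.

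To obtain the pointwise estimate I would use the Cotlar-type inequality announced in the appendix. For kernels in $\mathcal{H}_{\overline{A}}$ this should take the form
\[
(T^{+})^{*}f(x)\lesssim M^{+}(T^{+}f)(x)+M_{A}^{+}f(x),
\]
or a $\delta$-variant with $M_{\delta}^{+}(T^{+}f)$ on the right. Given such an inequality, the sharp estimate is obtained in the standard way: for a fixed $h>0$ and $x\in\mathbb{R}$, split $f=f\chi_{(x,x+2h)}+f\chi_{\mathbb{R}\setminus(x,x+2h)}$; the local part is handled by the weak type $(1,1)$ of $(T^{+})^{*}$ combined with Kolmogorov's inequality, producing an Orlicz average controlled by $M_{A}^{+}f(x)$, while the oscillation of the global part on $(x,x+h)$ is controlled via the $L^{\overline{A}}$-H\"ormander condition together with the generalized H\"older inequality for Young functions, again giving a contribution dominated by $M_{A}^{+}f(x)$. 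This is exactly the mechanism behind the known estimate $M_{\delta}^{\sharp,+}(T^{+}f)\lesssim M_{A}^{+}f$, transplanted to the maximal truncation through the Cotlar bound.

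Once the pointwise estimate is in place, I fix $\delta\in(0,1)$ with $1<p/\delta<\max\{p,1\}+\varepsilon$ (possible since $p/\delta\to\infty$ as $\delta\to 0^{+}$), observe that $|(T^{+})^{*}f(x)|\leq M_{\delta}^{+}((T^{+})^{*}f)(x)$ almost everywhere by one-sided Lebesgue differentiation applied to $|(T^{+})^{*}f|^{\delta}$, and apply Lemma \ref{Lem:YabutaDelta} with $\rho=\max\{p,1\}+\varepsilon$ to obtain
\[
\|(T^{+})^{*}f\|_{L^{p}(w)}\leq\|M_{\delta}^{+}((T^{+})^{*}f)\|_{L^{p}(w)}\lesssim\|M_{\delta}^{\sharp,+}((T^{+})^{*}f)\|_{L^{p}(w)}\lesssim\|M_{A}^{+}f\|_{L^{p}(w)},
\]
which is the desired inequality.

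The main obstacle is the Cotlar-type pointwise inequality itself. The classical two-sided version under an $L^{r}$-H\"ormander hypothesis is well-documented, but its one-sided counterpart with Orlicz smoothness does not appear to be available in the literature, which is precisely why the paper is devoting an appendix to it. Everything downstream of that inequality, including the sharp estimate for $(T^{+})^{*}$ and the weighted $L^{p}$ bound, is routine once the arguments already given for $T^{+}$ and $(T^{+})_{b}^{k}$ in this section are adapted.
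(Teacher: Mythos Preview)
Your approach is correct in outline but takes a longer route than the paper. The paper does \emph{not} establish a sharp maximal estimate for $(T^{+})^{*}f$ at all. Instead it applies the Cotlar inequality of Theorem~\ref{thm:CotlarIneq} directly at the level of $L^{p}(w)$ norms:
\[
\|(T^{+})^{*}f\|_{L^{p}(w)}\lesssim\|M_{\delta}^{+}(T^{+}f)\|_{L^{p}(w)}+\|M_{A}^{+}f\|_{L^{p}(w)},
\]
and then bounds the first term exactly as in Theorem~\ref{thm:CZO}, using the \emph{already known} estimate $M_{\delta}^{\sharp,+}(T^{+}f)\lesssim M_{A}^{+}f$ together with Lemma~\ref{Lem:YabutaDelta}. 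Thus the only sharp maximal estimate needed is the one for $T^{+}f$, not for $(T^{+})^{*}f$.

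Your plan to prove $M_{\delta}^{\sharp,+}((T^{+})^{*}f)\lesssim M_{A}^{+}f$ would also work, but it introduces extra labour: in the global part of the local/global split you must control the oscillation of the \emph{sublinear} operator $(T^{+})^{*}f_{2}$ on $(x,x+h)$, which is not as straightforward as for the linear $T^{+}$ (the truncations at scales $\varepsilon\geq h$ do interact with the support of $f_{2}$). This can be handled, but it is precisely the complication the paper's shortcut avoids. A minor slip: your parenthetical ``possible since $p/\delta\to\infty$ as $\delta\to 0^{+}$'' is the wrong direction; one needs $\delta$ close enough to $\min\{p,1\}$ so that $p/\delta$ stays below $\max\{p,1\}+\varepsilon$, and such $\delta\in(0,1)$ always exists.
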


Before settling this result, observe that if $K$ satisfies (\ref{eq:HLipschitz})
in particular $K\in\mathcal{H}_{\infty}$, and consequently, the preceding
result recovers the main result in \cite{RdT}.
\begin{proof}[Proof of Theorem \ref{thm:Max}]
Observe that by the Cotlar type inequality in Theorem \ref{thm:CotlarIneq},
we have that for any $\delta\in(0,1)$. 
\[
\|(T^{+})^{*}f\|_{L^{p}(w)}\lesssim\|M_{\delta}^{+}(T^{+}f)\|_{L^{p}(w)}+\|M_{A}^{+}f\|_{L^{p}(w)}
\]
and hence it suffices to deal with the first term. An analogous argument
to the one provided to settle Theorem \ref{thm:CZO}, choosing a suitable
$\delta$, shows that 
\[
\|M_{\delta}^{+}(T^{+}f)\|_{L^{p}(w)}\lesssim\|M_{A}^{+}f\|_{L^{p}(w)}
\]
and we are done.
\end{proof}

\subsection{The differential transform operator}

Given $\{v_{j}\}\in\ell^{\infty}$ we define
\[
T^{+}f(x)=\sum_{j\in\mathbb{Z}}v_{j}(D_{j}f(x)-D_{j-1}f(x)),\qquad D_{j}f(x)=\frac{1}{2^{j}}\int_{x}^{x+2^{j}}f(t)dt.
\]
As the authors point out in \cite{LMRdT} this operator, that was
previously studied in \cite{BLMRMdTT,JR}, arises when studying the
rate of convergence of the averages $D_{j}f$. Note that $D_{j}f\rightarrow f$
a.e. when $j\rightarrow-\infty$ and that $D_{j}f\rightarrow0$ when
$j\rightarrow\infty$ for appropriate $f$.

Observe that $T^{+}$ is a one-sided singular integral since $T^{+}f=K*f$
for $K$ supported on $(-\infty,0)$ and defined as
\[
K(x)=\sum_{j\in\mathbb{Z}}v_{j}\left(\frac{1}{2^{j}}\chi_{(-2^{j},0)}(x)-\frac{1}{2^{j-1}}\chi_{(-2^{j-1},0)}(x)\right).
\]
As it was stated in \cite[Remark 4.11]{LMRdT}, it is possible to
show that $K\in\mathcal{H}_{A,k}$ with $A(t)=\exp\left(\frac{t^{\frac{1}{1+k}}}{(\log t)^{\frac{1+\varepsilon}{1+k}}}\right)$,
and hence by techniques in \cite{LMRdT} we have that if $b\in BMO$
and $k$ is a non-negative integer, for $0<\delta<\gamma<1$, 
\[
M_{\delta}^{\sharp,+}((T^{+})_{b}^{k}f)(x)\lesssim\sum_{j=0}^{k-1}\|b\|_{BMO}^{k-j}M_{\gamma}^{+}((T^{+})_{b}^{j})f(x)+\|b\|_{BMO}^{k}M_{L(\log L)^{1+k}(\log\log L)^{1+\varepsilon}}^{+}f(x).
\]
where the first term is interpreted as 0 if $k=0$. Then, arguing
as in the preceding section we have the following result.
\begin{thm}
Let $k$ be a non negative integer. Then, if $0<p<\infty$, $\varepsilon>0$
and $w\in C_{\max\{p,1\}+\varepsilon}^{+}$, we have that
\[
\|(T^{+})_{b}^{k}f\|_{L^{p}(w)}\lesssim\|b\|_{BMO}^{k}\|M_{L(\log L)^{1+k}(\log\log L)^{1+\varepsilon}}^{+}f\|_{L^{p}(w)}.
\]
\end{thm}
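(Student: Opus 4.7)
The plan is to follow, essentially line for line, the inductive scheme already carried out in the proof of Theorem \ref{thm:comm}, only replacing the iterated maximal $(M^{+})^{k+1}f$ on the right-hand side by the Orlicz maximal $M_{L(\log L)^{1+k}(\log\log L)^{1+\varepsilon}}^{+}f$ dictated by the new pointwise sharp estimate quoted immediately before the statement. Concretely, I would prove by induction on $k$ the strengthened inequality
\[
\|M_{\delta_{1}}^{+}((T^{+})_{b}^{k}f)\|_{L^{p}(w)}\lesssim\|b\|_{BMO}^{k}\|M_{L(\log L)^{1+k}(\log\log L)^{1+\varepsilon}}^{+}f\|_{L^{p}(w)}
\]
for some fixed $\delta_{1}\in(0,1)$ with $1<p/\delta_{1}<\max\{p,1\}+\varepsilon$. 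This at once implies the theorem because $|(T^{+})_{b}^{k}f|\leq M_{\delta_{1}}^{+}((T^{+})_{b}^{k}f)$ pointwise almost everywhere.

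The base case $k=0$ is a direct combination of the sharp estimate (whose sum is empty) with Lemma \ref{Lem:YabutaDelta} applied with $\rho=\max\{p,1\}+\varepsilon$, in complete analogy with the proof of Theorem \ref{thm:CZO}. For the inductive step I would pick $\delta_{1}<\delta_{2}<1$ with $p/\delta_{i}<\max\{p,1\}+\varepsilon$, apply Lemma \ref{Lem:YabutaDelta} to pass from $M_{\delta_{1}}^{+}$ to $M_{\delta_{1}}^{\sharp,+}$, and then insert the pointwise bound with $\delta=\delta_{1}$ and $\gamma=\delta_{2}$. Each of the resulting terms $\|b\|_{BMO}^{k-j}\|M_{\delta_{2}}^{+}((T^{+})_{b}^{j}f)\|_{L^{p}(w)}$ with $j<k$ is then controlled via the induction hypothesis at level $j$, producing $\|b\|_{BMO}^{k}\|M_{L(\log L)^{1+j}(\log\log L)^{1+\varepsilon}}^{+}f\|_{L^{p}(w)}$; since the Young functions $L(\log L)^{1+j}(\log\log L)^{1+\varepsilon}$ are pointwise increasing in $j$, the associated Orlicz maximals satisfy $M_{L(\log L)^{1+j}(\log\log L)^{1+\varepsilon}}^{+}\leq M_{L(\log L)^{1+k}(\log\log L)^{1+\varepsilon}}^{+}$, and every term is absorbed into the leading one.

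I do not anticipate any serious obstacle; the only minor point worth double-checking is the monotonicity of the Orlicz maximal in its Young function, which is immediate from the Luxemburg definition. Everything else is a direct transcription of the argument for Theorem \ref{thm:comm}, the only input specific to the differential transform being the Young function $A(t)=L(\log L)^{1+k}(\log\log L)^{1+\varepsilon}$ coming from \cite[Remark 4.11]{LMRdT}, which is already built into the pointwise sharp estimate recalled before the theorem.
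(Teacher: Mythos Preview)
Your proposal is correct and matches the paper's own approach exactly: the paper simply writes ``arguing as in the preceding section'' (i.e., the proofs of Theorems \ref{thm:CZO} and \ref{thm:comm}), and what you have outlined is precisely that induction, with the Orlicz maximal $M_{L(\log L)^{1+k}(\log\log L)^{1+\varepsilon}}^{+}$ replacing $(M^{+})^{k+1}$ and the monotonicity in $j$ of the Young functions used to absorb the lower-order terms. The only cosmetic point is that the inductive statement should be formulated for \emph{every} admissible $\delta_{1}$ (not a single fixed one), so that at the inductive step you may invoke it with $\delta_{2}$; this is how the paper handles it in the proof of Theorem \ref{thm:comm} as well.
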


\subsection{The one-sided discrete square function and its commutator}

We recall that the one-sided discrete square function is defined as
follows. If $f$ is locally integrable in $\mathbb{R}$ and $s>0$
we consider the averages
\[
A_{s}f(x)=\frac{1}{s}\int_{x}^{x+s}f(y)dy.
\]
Hence the one-sided discrete square function of $f$ is given by
\[
S^{+}f(x)=\left(\sum_{n\in\mathbb{Z}}|A_{2^{n}}f(x)-A_{2^{n-1}}f(x)|^{2}\right)^{\frac{1}{2}}.
\]
This operator was studied in \cite{dTT} and \cite{LRdT1}. In \cite{LRdT2}
the authors deal with the following operator
\[
\mathcal{O}^{+}f(x)=\left(\sum_{n\in\mathbb{Z}}\sup_{s\in[2^{n},2^{n+1})}|A_{2^{n}}f(x)-A_{s}f(x)|^{2}\right)^{\frac{1}{2}},
\]
which dominates pointwise $S^{+}f$, and show that if $0<\delta<1$
\[
M_{\delta}^{\sharp,+}(\mathcal{O}^{+}f)(x)\lesssim M_{L\log L}^{+}f(x).
\]
This fact allows them to settle the corresponding Coifman-Fefferman
estimate. Here, arguing as we did to settle Theorem \ref{thm:CZO},
we have the following result.
\begin{thm}
Let $0<p<\infty$ and $\varepsilon>0$ and assume that $w\in C_{\max\{p,1\}+\varepsilon}^{+}.$
Then 

\[
\|Gf\|_{L^{p}(w)}\lesssim\|M_{L\log L}^{+}f\|_{L^{p}(w)}
\]
where $G$ stands either for $S^{+}$ or for $\mathcal{O}^{+}$.
\end{thm}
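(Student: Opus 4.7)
The plan is to mimic the argument used for Theorem \ref{thm:CZO}, replacing the pointwise sharp bound \eqref{eq:MsharpCZO} for one-sided Calder\'on--Zygmund operators by the corresponding estimate $M_{\delta}^{\sharp,+}(\mathcal{O}^{+}f)\lesssim M_{L\log L}^{+}f$ quoted just before the statement, and then to reduce the $S^{+}$ case to the $\mathcal{O}^{+}$ case by the pointwise domination $S^{+}f(x)\le \mathcal{O}^{+}f(x)$ that is explicitly noted in the excerpt.

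The first step is to choose $\delta\in(0,\min\{1,p\})$ such that
\[
1<\frac{p}{\delta}<\max\{p,1\}+\varepsilon.
\]
This is always possible: if $p\ge 1$ any $\delta\in\bigl(p/(p+\varepsilon),\,1\bigr)$ works, while if $p<1$ any $\delta\in\bigl(p/(1+\varepsilon),\,p\bigr)$ works, and in both cases the interval is nonempty because $p/(\max\{p,1\}+\varepsilon)<\min\{p,1\}$.

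With this $\delta$ fixed, I would apply the standard pointwise bound $|h(x)|\le M_{\delta}^{+}h(x)$ (a consequence of the Lebesgue differentiation theorem applied to $|h|^{\delta}$) to $h=\mathcal{O}^{+}f$, then invoke Lemma \ref{Lem:YabutaDelta} with $\rho=\max\{p,1\}+\varepsilon$ (whose hypothesis $\rho>p/\delta$ is exactly the choice of $\delta$ above), and finally use the quoted pointwise sharp estimate. This yields the chain
\[
\|\mathcal{O}^{+}f\|_{L^{p}(w)}\le\|M_{\delta}^{+}(\mathcal{O}^{+}f)\|_{L^{p}(w)}\lesssim\|M_{\delta}^{\sharp,+}(\mathcal{O}^{+}f)\|_{L^{p}(w)}\lesssim\|M_{L\log L}^{+}f\|_{L^{p}(w)},
\]
which is the desired inequality for $G=\mathcal{O}^{+}$. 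The case $G=S^{+}$ follows at once since $S^{+}f\le \mathcal{O}^{+}f$ pointwise.

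There is no real obstacle here: the three ingredients (the pointwise sharp bound from \cite{LRdT2}, Lemma \ref{Lem:YabutaDelta}, and the domination $S^{+}\le \mathcal{O}^{+}$) are all in place, and the only mild care needed is the simultaneous choice of $\delta<1$ (so that the pointwise sharp estimate is applicable) and $p/\delta<\max\{p,1\}+\varepsilon$ (so that Lemma \ref{Lem:YabutaDelta} applies), which is handled by the elementary check above. In effect the statement is a direct transcription of the Theorem \ref{thm:CZO} argument to the setting of one-sided square functions.
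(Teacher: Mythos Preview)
Your argument is correct and matches the paper's intended proof: the paper explicitly says ``arguing as we did to settle Theorem \ref{thm:CZO}'', and your chain $\|\mathcal{O}^{+}f\|_{L^{p}(w)}\le\|M_{\delta}^{+}(\mathcal{O}^{+}f)\|_{L^{p}(w)}\lesssim\|M_{\delta}^{\sharp,+}(\mathcal{O}^{+}f)\|_{L^{p}(w)}\lesssim\|M_{L\log L}^{+}f\|_{L^{p}(w)}$ together with the pointwise domination $S^{+}f\le\mathcal{O}^{+}f$ is exactly that argument transplanted to the present setting. Your explicit verification that a suitable $\delta$ exists is a nice addition.
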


Further assuming that $b\in BMO$ the authors also study the commutators
associated to the operators above. They show that, for $0<\delta<\gamma<1$,
\[
M_{\delta}^{\sharp,+}((\mathcal{O}^{+})_{b}^{k}f)(x)\lesssim\sum_{j=0}^{k-1}\|b\|_{BMO}^{k-j}M_{\gamma}^{+}((\mathcal{O}^{+})_{b}^{j})f(x)+M_{L(\log L)^{1+k}}^{+}f(x).
\]
As a consequence we can derive the following result.
\begin{thm}
Let $0<p<\infty$ and $\varepsilon>0$ and assume that $w\in C_{\max\{p,1\}+\varepsilon}^{+}$
and that $b\in BMO$. If $k$ is a positive integer, then 

\[
\|G{}_{b}^{k}f\|_{L^{p}(w)}\lesssim\|b\|_{BMO}^{k}\|M_{L(\log L)^{1+k}}^{+}f\|_{L^{p}(w)}
\]
where $G$ stands either for $S^{+}$ or for $\mathcal{O}^{+}$.
\end{thm}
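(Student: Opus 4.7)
The plan is to mirror the proof of Theorem \ref{thm:comm}, replacing $(M^+)^{k+1}$ with the Orlicz maximal $M_{L(\log L)^{1+k}}^+$ that appears in the sharp-maximal pointwise estimate recalled just before the statement. First I would reduce to the case $G=\mathcal{O}^+$: since $\mathcal{O}^+ f\ge S^+ f$ pointwise and the same domination propagates componentwise through the (vector-valued) linear structure used to define the iterated commutators, $(S^+)_b^k f \le (\mathcal{O}^+)_b^k f$ pointwise, so the $S^+$ case is immediate from the $\mathcal{O}^+$ case.

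The argument for $\mathcal{O}^+$ proceeds by induction on $k$, after fixing $0<\delta_1<\delta_2<1$ small enough that $1<p/\delta_i<\max\{p,1\}+\varepsilon$. For the base case $k=1$, Lemma \ref{Lem:YabutaDelta} (with $\rho=\max\{p,1\}+\varepsilon$) together with the pointwise sharp inequality (taken with $\gamma=\delta_2$) give
\begin{align*}
\|(\mathcal{O}^+)_b f\|_{L^p(w)} &\lesssim \|M_{\delta_1}^{\sharp,+}(\mathcal{O}^+)_b f\|_{L^p(w)}\\
&\lesssim \|b\|_{BMO}\bigl(\|M_{\delta_2}^+\mathcal{O}^+ f\|_{L^p(w)} + \|M_{L\log L}^+ f\|_{L^p(w)}\bigr),
\end{align*}
and the first summand is controlled by the Coifman--Fefferman estimate for $\mathcal{O}^+$ established in the previous theorem, yielding $\|b\|_{BMO}\|M_{L\log L}^+ f\|_{L^p(w)}$. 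Monotonicity of Orlicz maxima then dominates this by $\|b\|_{BMO}\|M_{L(\log L)^2}^+ f\|_{L^p(w)}$, which is the desired bound for $k=1$.

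For the inductive step, assuming the conclusion for $1,\ldots,k-1$, apply the pointwise sharp bound for $(\mathcal{O}^+)_b^k$ followed by Lemma \ref{Lem:YabutaDelta} to get
\[
\|(\mathcal{O}^+)_b^k f\|_{L^p(w)} \lesssim \sum_{j=0}^{k-1}\|b\|_{BMO}^{k-j}\|M_{\delta_2}^+(\mathcal{O}^+)_b^j f\|_{L^p(w)} + \|b\|_{BMO}^k\|M_{L(\log L)^{1+k}}^+ f\|_{L^p(w)}.
\]
Each term in the sum is handled by one more application of Lemma \ref{Lem:YabutaDelta} followed by the inductive hypothesis, which produces $\|b\|_{BMO}^j\|M_{L(\log L)^{1+j}}^+ f\|_{L^p(w)}$; since $L(\log L)^{1+j}\le L(\log L)^{1+k}$ for $j\le k$, a monotonicity argument absorbs each contribution into the desired right-hand side.

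The main technical obstacle is the finiteness hypothesis built into Theorem \ref{thm:OneSidedYabuta}, and hence into Lemma \ref{Lem:YabutaDelta}, which must be verified at each inductive application. This is handled as in the previous proofs: one first establishes the estimate for $f\in L^\infty_c$, where the iterated commutator $(\mathcal{O}^+)_b^j f$ has enough decay that $M_{\delta_1}^+(\mathcal{O}^+)_b^j f\in L^p(w)$ follows from the known unweighted theory for $\mathcal{O}^+$, and then extends to general $f$ by a standard density argument. Beyond this, the proof is bookkeeping on the Young functions and direct recycling of the scheme of Theorem \ref{thm:comm}.
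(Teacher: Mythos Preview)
Your approach is correct and coincides with the paper's: the paper does not write out a separate proof but simply records that the theorem follows \emph{as a consequence} of the displayed $M_\delta^{\sharp,+}$ estimate for $(\mathcal{O}^+)_b^k$, arguing exactly as in the proof of Theorem~\ref{thm:comm}. One small fix to make the induction close cleanly: as in \eqref{eq:HI}, the inductive hypothesis should be stated for $\|M_{\delta_1}^+(\mathcal{O}^+)_b^j f\|_{L^p(w)}$ with every admissible $\delta_1$, so it applies directly to the $M_{\delta_2}^+$ terms in the sum without a further appeal to Lemma~\ref{Lem:YabutaDelta}.
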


\subsection{Riemann-Liouville and Weyl Fractional Integral Operators and their
commutators}

We recall that given $0<\alpha<1$ and locally integrable functions
$f$ and $b$, the Weyl fractional integral and its commutators are
defined as
\begin{align*}
I_{\alpha}^{+}f(x) & =\int_{x}^{\infty}\frac{f(y)}{(y-x)^{1-\alpha}}dy\qquad\text{ and }\\
(I_{\alpha}^{+})_{b}^{k}f(x) & =\int_{x}^{\infty}(b(x)-b(y))^{k}\frac{f(y)}{(y-x)^{1-\alpha}}dy
\end{align*}
respectively. Analogously we define the Riemann-Liouville fractional
integral and its commutators as 
\begin{align*}
I_{\alpha}^{-}f(x) & =\int_{-\infty}^{x}\frac{f(y)}{(x-y)^{1-\alpha}}dy\qquad\text{and}\\
(I_{\alpha}^{-})_{b}^{k}f(x) & =\int_{-\infty}^{x}(b(x)-b(y))^{k}\frac{f(y)}{(x-y)^{1-\alpha}}dy.
\end{align*}
In \cite{BL} it was shown that for every non-negative $k$, if $b\in BMO$
and $0<\delta<\gamma<1$, 
\[
M_{\delta}^{\sharp,+}((I_{\alpha}^{+})_{b}^{k}f)(x)\lesssim\sum_{j=0}^{k-1}\|b\|_{BMO}^{k-j}M_{\gamma}^{+}((I_{\alpha}^{+})_{b}^{j})f(x)+\|b\|_{BMO}^{k}M_{(\alpha),L(\log L)^{k}}^{+}f(x)
\]
where
\[
M_{(\alpha),L(\log L)^{k}}^{+}f(x)=\sup_{h>0}h^{\alpha}\|f\|_{L(\log L)^{k},(x,x+h)}
\]
and the first term in the right hand side is interpreted as $0$ if
$k=0$. Relying upon that $M_{\delta}^{\sharp,+}$ estimate and arguing
as in the proofs of Theorems \ref{thm:CZO} and \ref{thm:comm} it
is possible to settle the following result.
\begin{thm}
Let $k$ be a non negative integer. Then, if $0<p<\infty$, $\varepsilon>0$
and $w\in C_{\max\{p,1\}+\varepsilon}^{+}$ we have that
\[
\|(I_{\alpha}^{+})_{b}^{k}f\|_{L^{p}(w)}\lesssim\|b\|_{BMO}^{k}\|M_{(\alpha),L(\log L)^{k}}^{+}\|_{L^{p}(w)}.
\]
\end{thm}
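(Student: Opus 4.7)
The plan is to mirror the inductive argument from the proof of Theorem \ref{thm:comm}. I will rely on the pointwise sharp estimate for $(I_{\alpha}^{+})_{b}^{k}$ displayed just before the theorem, combined with Lemma \ref{Lem:YabutaDelta}, and proceed by induction on $k$. The first step is to fix once and for all a pair $0<\delta_{1}<\delta_{2}<1$ satisfying $1<p/\delta_{2}<p/\delta_{1}<\max\{p,1\}+\varepsilon$; such a pair exists since for $p\geq 1$ we can take both $\delta_{i}$ close to $1$, while for $p<1$ we can pick $\delta_{i}\in(p/(1+\varepsilon),p)$. For this choice, Lemma \ref{Lem:YabutaDelta} applies to $M_{\delta_{i}}^{+}$ for $i=1,2$, so it will suffice to control $\|M_{\delta_{1}}^{+}(I_{\alpha}^{+})_{b}^{k}f\|_{L^{p}(w)}$ by $\|b\|_{BMO}^{k}\|M_{(\alpha),L(\log L)^{k}}^{+}f\|_{L^{p}(w)}$.

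For the base case $k=0$, the convention that the sum is empty reduces the pointwise bound of \cite{BL} to $M_{\delta_{1}}^{\sharp,+}(I_{\alpha}^{+}f)\lesssim M_{(\alpha)}^{+}f=M_{(\alpha),L(\log L)^{0}}^{+}f$, and Lemma \ref{Lem:YabutaDelta} then settles this case immediately. For the inductive step I will assume the stronger statement
\[
\|M_{\delta}^{+}(I_{\alpha}^{+})_{b}^{j}f\|_{L^{p}(w)}\lesssim\|b\|_{BMO}^{j}\|M_{(\alpha),L(\log L)^{j}}^{+}f\|_{L^{p}(w)}
\]
for all $0\leq j<k$ and for $\delta\in\{\delta_{1},\delta_{2}\}$. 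Combining the pointwise sharp estimate with Lemma \ref{Lem:YabutaDelta} then gives
\begin{align*}
\|M_{\delta_{1}}^{+}(I_{\alpha}^{+})_{b}^{k}f\|_{L^{p}(w)} & \lesssim\sum_{j=0}^{k-1}\|b\|_{BMO}^{k-j}\|M_{\delta_{2}}^{+}(I_{\alpha}^{+})_{b}^{j}f\|_{L^{p}(w)}\\
 & \quad+\|b\|_{BMO}^{k}\|M_{(\alpha),L(\log L)^{k}}^{+}f\|_{L^{p}(w)}.
\end{align*}
Plugging the inductive hypothesis at $\delta=\delta_{2}$ into each summand and using the pointwise domination $M_{(\alpha),L(\log L)^{j}}^{+}f\lesssim M_{(\alpha),L(\log L)^{k}}^{+}f$ for $j\leq k$, which follows from the monotonicity of Orlicz averages in the defining Young function, will collapse the sum and finish the induction.

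The main obstacle, as in the proof of Theorem \ref{thm:comm}, is essentially bookkeeping: one must fix the pair $\delta_{1}<\delta_{2}$ before starting the induction and check that the pointwise sharp estimate of \cite{BL} can indeed be invoked with this particular pair (which it can, since it is valid for every $0<\delta<\gamma<1$). No new analytic ingredient beyond the sharp function bound of \cite{BL} and Lemma \ref{Lem:YabutaDelta} is needed; once the admissible parameters are pinned down, the iteration is routine.
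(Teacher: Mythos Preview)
Your proposal is correct and follows essentially the same route the paper indicates: the paper does not spell out a proof here but simply says to argue as in Theorems~\ref{thm:CZO} and~\ref{thm:comm}, using the pointwise sharp estimate from \cite{BL} together with Lemma~\ref{Lem:YabutaDelta}, which is exactly what you do. Your bookkeeping is in fact slightly tidier than the paper's model argument for Theorem~\ref{thm:comm} (you make explicit that the inductive hypothesis must be available at $\delta_{2}$, not just $\delta_{1}$, and you identify the Orlicz monotonicity $M_{(\alpha),L(\log L)^{j}}^{+}f\lesssim M_{(\alpha),L(\log L)^{k}}^{+}f$ needed to collapse the sum, which plays the role that $(M^{+})^{j}f\leq(M^{+})^{k+1}f$ plays in Theorem~\ref{thm:comm}).
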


\section{\label{sec:Proof-MT}Proof of Theorem \ref{thm:OneSidedYabuta}}

\subsection{Sufficiency}

To settle the sufficiency part in Theorem \ref{thm:OneSidedYabuta}
we need to borrow two Lemmas from \cite{RdT}. The first one is the
following.
\begin{lem}
\label{lem:1}Assume that $w\in C_{q}^{+}$ with $1<q<\infty$. Then,
for any $\delta>0$ there exists $c(\delta)$ such that for any disjoint
family of intervals $J_{j}$ contained in $I=(a,b)$ we have that
\[
\int_{I}\sum_{j}(M^{+}\chi_{J_{j}})^{q}w\leq c(\delta)w(I)+\delta\int_{\mathbb{R}}(M^{+}\chi_{I})^{q}w
\]
and 
\[
\int_{\mathbb{R}}\sum_{j}(M^{+}\chi_{J_{j}})^{q}w\lesssim\int_{\mathbb{R}}(M^{+}\chi_{I})^{q}w.
\]
\end{lem}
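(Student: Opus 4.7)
The plan is to decompose $(M^{+}\chi_{J_j})^q$ into an inner piece supported on $J_j$ and a tail piece supported on $(-\infty,a_j)$, and to bound each separately using disjointness of the $J_j$ and the $C_q^+$ condition. A direct computation from the definition of $M^+$ gives, for $J_j=(a_j,b_j)$,
\[
(M^{+}\chi_{J_j}(x))^{q} = \chi_{J_j}(x) + \left(\frac{|J_j|}{b_j-x}\right)^{q}\chi_{(-\infty,a_j)}(x).
\]
Summing the inner pieces over $j$ yields $\chi_{\bigcup J_j}\leq \chi_I$, and since $M^{+}\chi_I\geq \chi_I$ pointwise, this piece contributes at most $w(I)$ to the integral in the first displayed estimate and is absorbed by $\int_{\mathbb{R}}(M^{+}\chi_I)^{q}w$ in the second.

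The heart of the argument is bounding the tail. For each $j$ I would use the layer-cake representation
\[
\int_{\mathbb{R}}\chi_{(-\infty,a_j)}(x)\left(\frac{|J_j|}{b_j-x}\right)^{q}w(x)\,dx = q\int_{0}^{1}\lambda^{q-1}\,w\bigl((b_j-|J_j|/\lambda,\,b_j)\bigr)\,d\lambda,
\]
valid because the level set $\{M^{+}\chi_{J_j}>\lambda\}$ equals the interval $(b_j-|J_j|/\lambda,b_j)$ for $\lambda\in(0,1)$. To bound each $w$-measure on the right, I would apply the $C_q^+$ condition with a carefully chosen triple $a'<b'<c'$ satisfying $J_j\subset (a',b')$, $c'-b'<b'-a'$, and $(a',c')$ contained in a bounded dilate of $I$, so that $M^{+}\chi_{(a',c')}$ is comparable to $M^{+}\chi_I$. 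In the regime $|J_j|/\lambda\lesssim |I|$ this yields a bound of the form $(|J_j|/(\lambda|I|))^{\varepsilon}\int_{\mathbb{R}}(M^{+}\chi_I)^{q}w$; in the opposite regime $|J_j|/\lambda\gtrsim|I|$ I would instead invoke the pointwise comparison $|J_j|/(b_j-x)\lesssim M^{+}\chi_I(x)$ (valid since $J_j\subset I$) and dominate the tail directly by a multiple of $(M^{+}\chi_I)^{q}$. Integrating in $\lambda$ produces an overall gain of the form $(|J_j|/|I|)^{\varepsilon}$, and summing over the disjoint $J_j\subset I$ yields a convergent geometric series dominated by $\int_{\mathbb{R}}(M^{+}\chi_I)^{q}w$, proving the second displayed estimate.

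For the first displayed estimate, where integration is restricted to $I$, the extra factor $\delta$ is extracted by splitting the family at a threshold $\eta=\eta(\delta)$: there are at most $\lceil 1/\eta\rceil$ \emph{large} intervals with $|J_j|\geq\eta|I|$, each contributing at most $w(I)$ (since $M^{+}\chi_{J_j}\leq 1$), which accounts for the $c(\delta)w(I)$ term; the \emph{small} intervals are controlled by the same tail argument as above, with the gain $(|J_j|/|I|)^{\varepsilon}\leq \eta^{\varepsilon}$ producing a prefactor that can be made smaller than $\delta$ by choosing $\eta$ small enough.

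The main technical obstacle is the case split in the tail estimate according to whether $|J_j|/\lambda$ is smaller or larger than $|I|$: in the latter regime the level interval extends far beyond $I$, so $C_q^+$ cannot be applied with a triple of size comparable to $|I|$, forcing the direct pointwise comparison with $M^{+}\chi_I$. Executing this split uniformly in $j$, while keeping the combined sum geometrically convergent and the resulting outer scales commensurable with $|I|$, is what demands the careful choice of the $C_q^+$ triple at each scale.
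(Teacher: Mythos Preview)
The paper does not prove this lemma; it is quoted from \cite{RdT} (and ultimately follows Sawyer's two-sided scheme in \cite{S}). So the comparison below is against that argument.

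Your per-interval strategy has a genuine gap. After the layer-cake and $C_q^+$ steps, the contribution of a single $J_j$ in the ``near'' regime is controlled by $(|J_j|/|I|)^{\varepsilon}\int_{\mathbb{R}}(M^+\chi_I)^q w$, with $\varepsilon$ the exponent from the $C_q^+$ condition, and you then propose to sum in $j$. But $\sum_j (|J_j|/|I|)^\varepsilon$ is \emph{not} uniformly bounded over disjoint families: splitting $I$ into $N$ equal pieces gives $N\cdot N^{-\varepsilon}=N^{1-\varepsilon}\to\infty$ (the $C_q^+$ exponent $\varepsilon$ is typically small, certainly not assumed $\geq 1$). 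The same failure recurs in your argument for the first inequality: after separating out the at most $\lceil 1/\eta\rceil$ large intervals, the remaining small ones can still be arbitrarily many, so bounding each by $\eta^{\varepsilon}\int(M^+\chi_I)^q w$ and summing gives nothing. Treating the $J_j$ one at a time cannot yield either displayed estimate. (Your far-tail piece is essentially fine: for $x$ to the left of a fixed dilate of $I$ one has $M^+\chi_{J_j}(x)\lesssim (|J_j|/|I|)\,M^+\chi_I(x)$, and the $q$-th powers sum since $\sum_j(|J_j|/|I|)^q\le \sum_j|J_j|/|I|\le 1$.)

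What is missing is a \emph{collective} bound on $\Delta(x)=\sum_j (M^+\chi_{J_j}(x))^q$. The argument in \cite{RdT} (after \cite{S}) rests on the exponential decay of the Lebesgue distribution of $\Delta$ on a bounded dilate of $I$:
\[
\bigl|\{x\in 2I:\Delta(x)>\lambda\}\bigr|\le B\,e^{-D\lambda}\,|I|,
\]
which is where $q>1$ is used decisively (the paper itself invokes this fact later, in the proof of \eqref{eq:Conclusion}). One then applies $C_q^+$ to the level sets of $\Delta$---not to those of the individual $M^+\chi_{J_j}$---to get $w(\{x\in 2I:\Delta>\lambda\})\lesssim e^{-D\varepsilon\lambda}\int(M^+\chi_I)^q w$. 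Integrating from a height $\Lambda=\Lambda(\delta)$ gives the $\delta$-term of the first inequality, while the piece $\lambda\le\Lambda$ is trivially $\le \Lambda\,w(I)=c(\delta)\,w(I)$. The second inequality follows by combining this with the far-tail pointwise bound.
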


To state the next Lemma we need to define a new operator, $M_{p,q}^{+}$.
Let $f$ be an nonnegative measurable function. Let us consider
\[
\Omega_{k}=\left\{ x\in\mathbb{R}\,:\,f(x)>2^{k}\right\} =\bigcup_{i}I_{i}^{k}
\]
where $I_{i}^{k}$ are the connected components of $\Omega_{k}$.
Then 
\[
(M_{p,q}^{+}f(x))^{p}=\sum_{i,k}2^{pk}(M^{+}(\chi_{I_{i}^{k}})(x))^{q}.
\]
Having this definition at our disposal we present the second Lemma
we borrow from \cite{RdT}.
\begin{lem}
\label{lem:2}Let $1<p<q<\infty,$ $w\in C_{q}^{+}$ and $f$ non-negative,
bounded and of compact support. Then 
\[
\int_{\mathbb{R}}M_{p,q}^{+}(M^{+}f)w\lesssim\int_{\mathbb{R}}(M^{+}f)^{p}w.
\]
\end{lem}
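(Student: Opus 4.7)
Let $g=M^{+}f$ and expand the definition:
\[
\int_{\mathbb{R}}M_{p,q}^{+}g\cdot w=\int_{\mathbb{R}}\Bigl(\sum_{i,k}2^{pk}(M^{+}\chi_{I_{i}^{k}})^{q}\Bigr)^{1/p}w.
\]
My approach combines three ingredients: (i) the subadditivity $(A+B)^{1/p}\leq A^{1/p}+B^{1/p}$ for $p>1$ (which is where $p>1$ is first used) to peel off the sum in $k$; (ii) Lemma~\ref{lem:1}, where the hypothesis $w\in C_{q}^{+}$ enters, to control the per-level sums of $(M^{+}\chi_{I_{i}^{k}})^{q}$ against $w$; and (iii) the layer-cake identity $\int g^{p}w\simeq\sum_{k}2^{pk}w(\Omega_{k})$ as the target.

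Concretely, subadditivity yields
\[
M_{p,q}^{+}g(x)\leq\sum_{k}2^{k}\Bigl(\sum_{i}(M^{+}\chi_{I_{i}^{k}}(x))^{q}\Bigr)^{1/p}.
\]
At each level $k$ I would split the integration against $w$ into the portion over $\Omega_{k-1}$ (which has finite $w$-measure because $g$ is bounded and essentially compactly supported) and its complement. On $\Omega_{k-1}$, H\"older's inequality with exponents $p$ and $p'$ converts the outer $1/p$-root into
\[
\Bigl(\int_{\Omega_{k-1}}\sum_{i}(M^{+}\chi_{I_{i}^{k}})^{q}w\Bigr)^{1/p}w(\Omega_{k-1})^{1/p'},
\]
and Lemma~\ref{lem:1}, applied component-by-component inside each parent interval $I_{j}^{k-1}$ to the disjoint subfamily $\{I_{i}^{k}:I_{i}^{k}\subset I_{j}^{k-1}\}$, provides the quantitative bound
\[
\int_{\Omega_{k-1}}\sum_{i}(M^{+}\chi_{I_{i}^{k}})^{q}w\leq c(\delta)w(\Omega_{k-1})+\delta\sum_{j}\int(M^{+}\chi_{I_{j}^{k-1}})^{q}w.
\]
Outside $\Omega_{k-1}$ the intervals $I_{i}^{k}$ lie to the right of the integration point, and the full-line estimate of Lemma~\ref{lem:1} absorbs their contribution into the same $\sum_{j}\int(M^{+}\chi_{I_{j}^{k-1}})^{q}w$. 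Summing across $k$, the $\delta$-terms telescope geometrically for $\delta$ chosen small, while the main $c(\delta)w(\Omega_{k-1})$ terms, after multiplication by $2^{k}$ and a H\"older-in-$k$ summation against dyadic weights, assemble into $\simeq\sum_{k}2^{pk}w(\Omega_{k})\simeq\int g^{p}w$.

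The main obstacle I anticipate is precisely this final consolidation: the per-level estimate naturally produces $\sum_{k}2^{k}w(\Omega_{k-1})$, an $L^{1}(w)$-type quantity, which must be promoted to the $p$-power representation of $\int g^{p}w$ via the target layer-cake identity. Matching the outer $1/p$-root and the inner $p$-powers through a careful H\"older summation in $k$, without incurring a factor that compounds with the number of active levels, is the delicate combinatorial step; the tunability of $\delta$ in Lemma~\ref{lem:1} is exactly the flexibility required to absorb the cross-level terms and close the estimate.
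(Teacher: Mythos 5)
There is a genuine gap here, and it is the one you flag yourself, but it is worse than a delicate combinatorial step: the consolidation you aim for is false. Note first that the paper does not prove Lemma \ref{lem:2} at all; it quotes it from \cite{RdT}, and the form in which it is actually used in the proof of Theorem \ref{thm:OneSidedYabuta} is the unrooted inequality $\sum_{i,k}2^{pk}\int_{\mathbb{R}}(M^{+}\chi_{I_{i}^{k}})^{q}w\lesssim\int_{\mathbb{R}}(M^{+}f)^{p}w$, so your opening move --- taking the $1/p$-th root and peeling off the $k$-sum by subadditivity --- already replaces the statement that is needed by a structurally different one. More importantly, after H\"older on $\Omega_{k-1}$ your main term becomes $c(\delta)^{1/p}\,2^{k}w(\Omega_{k-1})$, so summing in $k$ you are trying to bound a constant times $\sum_{k}2^{k}w(\Omega_{k-1})\simeq\int_{\mathbb{R}}M^{+}f\,w$ by $\int_{\mathbb{R}}(M^{+}f)^{p}w\simeq\sum_{k}2^{pk}w(\Omega_{k})$. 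No H\"older-in-$k$ can achieve this: already for $w\equiv1$ (which is a $C_{q}^{+}$ weight) and $f=\chi_{(0,1)}$ one has $\int M^{+}f=\infty$ while $\int(M^{+}f)^{p}<\infty$, the divergence coming from the levels $k\to-\infty$, which is precisely where linearizing the $k$-sum loses the factor $2^{(p-1)k}$. In addition, your per-level display is not what Lemma \ref{lem:1} gives: applying it parent-by-parent inside $I_{j}^{k-1}$ misses, at points of that parent, the contribution of the level-$k$ intervals contained in parents lying to its right (where $M^{+}\chi_{I_{i}^{k}}$ is still positive); those cross terms, like the region outside $\Omega_{k-1}$, can only be reached through the full-line estimate of Lemma \ref{lem:1}, which carries a fixed constant and no $\delta$, so the level-$(k-1)$ quantity reappears with a non-small factor and cannot be absorbed.

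A telling symptom is that your sketch never uses $p<q$, which is essential. The argument in \cite{RdT} (modelled on \cite{S}) works directly with the unrooted sums $A_{k}=\sum_{i}\int_{\mathbb{R}}(M^{+}\chi_{I_{i}^{k}})^{q}w$ weighted by $2^{pk}$, and, in outline, compares level $k$ with level $k-a$ for a large gap $a$: inside each parent component of $\Omega_{k-a}$ one uses the first estimate of Lemma \ref{lem:1} with its tunable $\delta$, while outside one exploits the one-sided weak $(1,1)$ inequality, which shows that the level-$k$ components fill at most a fraction of order $2^{-a}$ of each parent and hence yields a small gain against $\int_{\mathbb{R}}(M^{+}\chi_{\text{parent}})^{q}w$; it is in beating the weight ratio $2^{pa}$ with these gains that $q>p$ enters, and the resulting term $\sum_{k}2^{pk}A_{k-a}$ is then absorbed into the left-hand side after truncating the range of $k$ to guarantee finiteness. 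I would recommend abandoning the rooted/subadditive framework and reworking the proof along these lines (or simply citing \cite{RdT}, as the paper does).
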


Having those Lemmas at our disposal we are in the position to settle
Theorem \ref{thm:OneSidedYabuta}.
\begin{proof}[Proof of Theorem \ref{thm:OneSidedYabuta}]
Let $\Omega_{k}=\{x\,:\,M^{+}f(x)>2\cdot2^{k}\}=\bigcup_{j}J_{j}^{k}$
where $J_{j}^{k}$ are the connected components of $\Omega_{k}$.
Let us fix $(a,b)=J_{j}^{k}$. We partition $(a,b)$ as follows. Let
$x_{0}=a$ and choose $x_{i+1}$ such that $x_{i+1}-x_{i}=b-x_{i+1}$
and let $I_{i}^{k}=(x_{i},x_{i+1})$. By the good-$\lambda$ inequality
established in \cite[Theorem 4]{MRdT}, we have that 
\[
|E_{i}^{k}|=\left|\left\{ x\in I_{i}^{k}\,:\,M^{+}f(x)>2^{k+1},M^{+,\sharp}f(x)\leq\gamma2^{k}\right\} \right|\leq C\gamma|I_{i+1}^{k}|\qquad0<\gamma<1.
\]
From the $C_{q}^{+}$ condition it follows that 
\[
w(E_{i}^{k})\leq C\gamma^{\varepsilon}\int_{\mathbb{R}}\left(M^{+}\chi_{I_{i}^{k}\cup I_{i+1}^{k}}\right)^{q}w.
\]
Summing on $i$ and taking into account Lemma \ref{lem:1} we have
that 
\begin{align*}
 & w\left(\left\{ x\in J_{j}^{k}\,:\,M^{+}f(x)>2^{k+1},M^{\sharp,+}f(x)\leq\gamma2^{k}\right\} \right)\\
 & \leq C\gamma^{\varepsilon}\sum_{i}\int_{\mathbb{R}}\left(M^{+}\chi_{I_{i}^{k}\cup I_{i+1}^{k}}\right)^{q}w\lesssim C\gamma^{\varepsilon}\int_{\mathbb{R}}\left(M^{+}\chi_{J_{j}^{k}}\right)^{q}w.
\end{align*}
Now summing over all $j$, 
\begin{align*}
 & w\left(\left\{ x\in\Omega_{k}\,:\,M^{+}f(x)>2^{k+1},M^{+,\sharp}f(x)\leq\gamma2^{k}\right\} \right)\\
\leq & C\gamma^{\varepsilon}\sum_{j}\int_{\mathbb{R}}\left(M^{+}\chi_{J_{j}^{k}}\right)^{q}w.
\end{align*}
Having those estimates at our disposal we can argue as follows.
\begin{align*}
 & \int_{\mathbb{R}}(M^{+}f(x))^{p}w(x)dx\lesssim\sum_{k\in\mathbb{Z}}2^{kp}w(\Omega_{k})\\
\leq & \sum_{k\in\mathbb{Z}}2^{kp}w\left(\left\{ x\in\Omega_{k}\,:\,M^{+}f(x)>2^{k+1},M^{+,\sharp}f(x)\leq\gamma2^{k}\right\} \right)\\
 & +\sum_{k\in\mathbb{Z}}2^{kp}w\left(\left\{ x\in\mathbb{R}\,:\,M^{+,\sharp}f(x)>\gamma2^{k}\right\} \right)\\
\leq & C\gamma^{\varepsilon}\sum_{k\in\mathbb{Z}}2^{kp}\sum_{j}\int_{\mathbb{R}}\left(M^{+}\chi_{J_{j}^{k}}\right)^{q}w+\sum_{k\in\mathbb{Z}}2^{kp}w\left(\left\{ x\in\mathbb{R}\,:\,M^{+,\sharp}f(x)>\gamma2^{k}\right\} \right)\\
\leq & C\gamma^{\varepsilon}\int_{\mathbb{R}}M_{p,q}^{+}(M^{+}f)(x)w(x)dx+c_{\gamma}\int_{\mathbb{R}}(M^{+,\sharp}f(x))^{p}w(x)dx\\
\leq & C\gamma^{\varepsilon}\int_{\mathbb{R}}(M^{+}f(x)){}^{p}w(x)dx+c_{\gamma}\int_{\mathbb{R}}(M^{+,\sharp}f(x))^{p}w(x)dx
\end{align*}
where in the last step we have used Lemma \ref{lem:2}. 

Observe that if $\int_{\mathbb{R}}\left(M^{+}f(x)\right){}^{p}w(x)dx<\infty$,
choosing $\gamma$ small enough the desired estimate follows. We end
the proof observing that for $f\in L_{c}^{\infty}$
\[
\int_{\mathbb{R}}\left(M^{+,\sharp}f(x)\right){}^{p}w(x)dx<\infty
\]
implies 
\[
\int_{\mathbb{R}}\left(M^{+}f(x)\right){}^{p}w(x)dx<\infty.
\]
Indeed, note that since $f\in L_{c}^{\infty}$ we may assume that
$\supp f\subset[a,b]$. For $x>b$ we have that $M^{+,\sharp}f(x)=M^{+}f(x)=0$,
and for $x\rightarrow-\infty$, $M^{+}f(x)\simeq M^{+,\sharp}f(x)\simeq\frac{1}{|x|}.$
\end{proof}

\subsection{Necessity}

The proof of the necessity will rely upon the following Lemma, which
is a one-sided version of some of the results in \cite{M,S}.
\begin{lem}
\label{lem:equivCp}If for every $a<b<c$ with $c-b<b-a$ and $E\subset(a,b)$
\begin{equation}
w(E)\lesssim\frac{1}{\left[1+\log^{+}\left(\frac{|(b,c)|}{|E|}\right)\right]^{p}}\int_{\mathbb{R}}M^{+}(\chi_{(a,c)})^{p}w(x)dx\label{eq:CpEquiv}
\end{equation}
then $w\in C_{p}^{+}$.
\end{lem}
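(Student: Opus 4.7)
My plan is to adapt the classical self-improvement scheme of Muckenhoupt \cite{M} and Sawyer \cite{S} to the one-sided interval setting. Fix a valid triple $(a,b,c)$ with $c-b<b-a$ and $E\subset(a,b)$, and write $t=|E|/(c-b)$ and $W=\int_{\mathbb{R}}(M^{+}\chi_{(a,c)})^{p}w$. For $t\geq 1/2$ the desired $C_{p}^{+}$ bound is immediate by enlarging the constant, so I reduce to the case of small $t$.

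The key structural ingredient I would use is the one-sided monotonicity of $M^{+}\chi_{I}$: whenever $a\leq c'\leq c$ the pointwise inequality $M^{+}\chi_{(a,c')}\leq M^{+}\chi_{(a,c)}$ holds, since both sides are $1$ on $(a,c')$ and to the left of $a$ they equal $(c'-a)/(c'-x)$ and $(c-a)/(c-x)$ respectively, with $(c'-a)/(c'-x)\leq(c-a)/(c-x)$ for $x\leq a$. As a consequence $W_{(a,c')}:=\int_{\mathbb{R}}(M^{+}\chi_{(a,c')})^{p}w\leq W$ for every $c'\in[b,c]$, so the family of auxiliary triples $(a,b,c_{k})$ obtained by shrinking $(b,c)$ by dyadic factors stays admissible and has $W_{(a,c_{k})}\leq W$.

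Building on this, I would distribute the mass of $E$ over a carefully chosen partition, pairing each piece $E_{n}$ with an auxiliary triple $(a_{n},b_{n},c_{n})$ so that every piece sees a strictly smaller relative length $|E_{n}|/(c_{n}-b_{n})$ than the original $t$. Summing (\ref{eq:CpEquiv}) applied to each pair should yield an iterable multiplicative improvement of the form $F(\sigma t)\leq \theta\,F(t)$ for some $\sigma,\theta\in(0,1)$, where $F(s):=\sup\{w(E)/W\}$ is taken over admissible configurations with $|E|/(c-b)\leq s$. Iterating gives $F(\sigma^{k}t)\leq\theta^{k}F(t)$, and choosing $k\sim\log(1/t)$ converts this into the polynomial decay $F(t)\lesssim t^{\varepsilon}$ with $\varepsilon=\log(1/\theta)/\log(1/\sigma)>0$, which is precisely the $C_{p}^{+}$ condition.

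The main obstacle I expect is constructing the partition of $E$ and the accompanying auxiliary triples so that two competing demands are met simultaneously: each piece must have an improved ratio (which pushes toward many small pieces), while $\sum_{n}W_{(a_{n},c_{n})}$ must remain comparable to $W$ (which limits the number of pieces). In \cite{M,S} this balance is achieved via a Whitney-type covering; in the one-sided setting I would replace it with a one-sided Calder\'on--Zygmund stopping time on $\chi_{E}$ relative to $M^{+}$, combined with the monotonicity noted above, which plays the role of the standard Whitney lemma here.
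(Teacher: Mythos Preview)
Your overall strategy---iterate to upgrade logarithmic decay to polynomial decay, using a stopping time on $M^{+}\chi_{E}$---matches the paper's. The gap is in the iteration itself. To obtain $F(\sigma t)\leq\theta F(t)$ with $\theta<1$ via a partition $E=\bigcup_{n}E_{n}$ paired with triples $(a_{n},b_{n},c_{n})$ of ratio at most $t$, the definition of $F$ only gives $w(E)\leq F(t)\sum_{n}W_{n}$, so you must force $\sum_{n}W_{n}\leq\theta W$. Your monotonicity observation yields $W_{n}\leq W$ for each $n$ individually, hence $\sum_{n}W_{n}\leq N\,W$, which is useless; and in any case shrinking $c$ \emph{increases} the ratio $|E_{n}|/(c_{n}-b_{n})$, working against the improvement you want. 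Summing (\ref{eq:CpEquiv}) over the pieces runs into the same obstruction: it produces $w(E)\lesssim(1+\log^{+}(1/t))^{-p}\sum_{n}W_{n}$ and you are again stuck controlling $\sum_{n}W_{n}$.

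The paper supplies precisely this missing estimate through a separate auxiliary lemma, itself a nontrivial consequence of (\ref{eq:CpEquiv}): for any disjoint subintervals $\{I_{k}\}$ of an interval $I$,
\[
\int_{\mathbb{R}}\sum_{k}\bigl(M^{+}\chi_{I_{k}}\bigr)^{p}w\;\lesssim\;\frac{1}{\bigl(1+\log(|I|/\textstyle\sum_{k}|I_{k}|)\bigr)^{p-1}}\int_{\mathbb{R}}\bigl(M^{+}\chi_{I}\bigr)^{p}w,
\]
whose proof requires a further geometric partition of $I$ and a careful summation of the tails of $M^{+}\chi_{I_{k}}$ outside $I$. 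With this in hand, one chooses $\delta$ so that the right side is at most $\tfrac{1}{2}\int(M^{+}\chi_{I})^{p}w$ when $\sum_{k}|I_{k}|\leq 2\delta|I|$, and then iterates not on $w(E)/W$ but on $\int\Delta_{j}w:=\int\sum_{i}(M^{+}\chi_{J_{i}^{j}})^{p}w$, where $J_{i}^{j}$ are the components of the level sets $E_{j}=\{M^{+}\chi_{E}>\delta^{j}\}$. The auxiliary lemma gives $\int\Delta_{j-1}w\leq\tfrac{1}{2}\int\Delta_{j}w$, and after $n\sim\log(1/t)$ steps one reads off $w(E)\leq 2^{1-n}\int\Delta_{n}w\lesssim t^{\varepsilon}\int(M^{+}\chi_{(a,c)})^{p}w$. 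Your stopping-time idea is exactly the right construction of the chain $E_{1}\subset\cdots\subset E_{n}$, but the contracting quantity must be $\int\Delta_{j}w$, not $F$, and the auxiliary lemma is the substantive analytic step that makes the contraction go through.
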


Before settling the Lemma we show how to derive from it the necessity
in Theorem \ref{eq:HI}. 
\begin{proof}[Proof of the necessity in Theorem \ref{eq:HI}]
Assume that for a certain $1<p<\infty$ and a weight $w$, 
\[
\|M^{+}f\|_{L^{p}(w)}\lesssim\|M^{\sharp,+}f\|_{L^{p}(w)}.
\]
Let $I=(a,c)$ an interval. Let $a<b<c$. Assume that $E$ is a measurable
set contained in $(a,b)$. Let us define
\begin{align*}
f(x) & =\log^{+}\left(\frac{|(b,c)|}{|E|}M^{-}(\chi_{E})(x)\right)+\chi_{I}(x)\\
 & =g(x)+\chi_{I}(x).
\end{align*}
Arguing as in \cite{S} and \cite{Y} we have that 
\begin{align}
 & \frac{1}{|(a,c)|}\int_{(a,c)}\log^{+}\left(\frac{|(b,c)|}{|E|}M^{-}(\chi_{E})\right)\lesssim1\label{eq:(1Y)}\\
 & \|f\|_{BMO^{+}}\lesssim1\label{eq:(2Y)}\\
 & f(x)=\log^{+}\left(\frac{|(b,c)|}{|E|}\right)+1\qquad\text{a.e.}\quad x\in E.\label{eq:(3Y)}
\end{align}
Observe that $M^{-}(\chi_{E})\leq\frac{|E|}{\dist(x,E)}$ for $x\not\in E$.
In particular, if $x\geq c$ we have that
\[
M^{-}(\chi_{E})(x)\leq\frac{|E|}{\dist(x,E)}\leq\frac{|E|}{x-b}\leq\frac{|E|}{c-b}.
\]
This yields $g(x)=0$ if $x\geq c$. On the other hand, if $x\leq a$
then also $M^{-}(\chi_{E})(x)=0$. And consequently $g(x)=0$. Hence
$\supp f\subset(a,c)$. Now we observe that if $x>c$ we have that
$M^{+}f(x)=0$ and $M^{\sharp,+}f(x)=0$. If $x<c$ we have two cases.
If $x\in(a-|I|,c)$ then, by (\ref{eq:(2Y)}) 
\[
M^{\sharp,+}f(x)\lesssim\|f\|_{BMO^{+}}\lesssim M^{+}(\chi_{(a,c)})(x).
\]
If $x<a-|I|$ then$M^{+}\chi_{(a,c)}(x)=\frac{c-a}{c-x}$ and $c-x\geq2|I|$,
from which it follows that $a-x=c-x-|I|\geq\frac{c-x}{2}$. Hence,
we have that by (\ref{eq:(1Y)})
\begin{align*}
M^{\sharp,+}f(x) & \lesssim M^{+}f(x)\leq M^{+}g(x)+M^{+}(\chi_{(a,c)})(x)\\
 & \leq\frac{\int_{a}^{c}g(y)dy}{a-x}+M^{+}(\chi_{(a,c)})(x)\\
 & \lesssim\frac{c-a}{c-x}\frac{1}{c-a}\int_{a}^{c}g(y)dy+M^{+}(\chi_{(a,c)})(x)\\
 & \lesssim\frac{c-a}{c-x}+M^{+}(\chi_{(a,c)})(x)\lesssim M^{+}(\chi_{(a,c)})(x).
\end{align*}
Gathering the estimates above we have that 
\[
M^{\sharp,+}f(x)\lesssim M^{+}(\chi_{(a,c)})(x)\qquad x\in\mathbb{R}.
\]
Taking into account the preceding estimate and (\ref{eq:(3Y)}), we
have that
\begin{align*}
w(E) & =\frac{1}{\left[1+\log^{+}\left(\frac{|(b,c)|}{|E|}\right)\right]^{p}}\left[1+\log^{+}\left(\frac{|(b,c)|}{|E|}\right)\right]^{p}\int_{E}w(x)dx\\
 & =\frac{1}{\left[1+\log^{+}\left(\frac{|(b,c)|}{|E|}\right)\right]^{p}}\int_{E}|f(x)|^{p}w(x)dx\\
 & \leq\frac{1}{\left[1+\log^{+}\left(\frac{|(b,c)|}{|E|}\right)\right]^{p}}\int_{\mathbb{R}}(M^{+}f(x))^{p}w(x)dx\\
 & \lesssim\frac{1}{\left[1+\log^{+}\left(\frac{|(b,c)|}{|E|}\right)\right]^{p}}\int_{\mathbb{R}}|M^{\sharp,+}f(x)|^{p}w(x)dx\\
 & \lesssim\frac{1}{\left[1+\log^{+}\left(\frac{|(b,c)|}{|E|}\right)\right]^{p}}\int_{\mathbb{R}}(M^{+}\chi_{(a,c)})^{p}w(x)dx
\end{align*}
and we are done.
\end{proof}
We devote the remainder of the section to settle Lemma \ref{lem:equivCp}.
First we will need the following Lemma.
\begin{lem}
Let $1<p<\infty$. Assume that $w$ is a weight such that
\[
\int_{E}w\lesssim\frac{1}{\left(1+\log^{+}\left(\frac{c-b}{|E|}\right)\right)^{p}}\int_{-\infty}^{\infty}\left(M^{+}\chi_{(a,c)}\right)^{p}w
\]
for every $a<b<c$ with $c-b<b-a$ and where $E$ is any measurable
set contained in $(a,b)$. Then, for every family $\{I_{k}\}_{k=1}^{n}$
of disjoint subintervals of an interval $I$, if we denote
\[
\Delta(x)=\sum_{k=1}^{n}M^{+}(\chi_{I_{k}})^{p}(x),
\]
then 
\begin{equation}
\int_{\mathbb{R}}\Delta w\lesssim\frac{1}{\left(1+\log\left(\frac{|I|}{\sum_{k=1}^{n}|I_{k}|}\right)\right)^{p-1}}\int_{\mathbb{R}}\left(M^{+}\chi_{I}\right)^{p}w.\label{eq:Conclusion}
\end{equation}
\end{lem}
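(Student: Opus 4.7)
The plan is to control $\int \Delta\, w$ via a Cavalieri decomposition of each $(M^{+}\chi_{I_{k}})^{p}$, followed by a dyadic decomposition in the layer-cake parameter $\lambda$, and an application of the hypothesis to each dyadic layer. Writing $I_k=(a_k,b_k)$ and $\ell_k=|I_k|$, one verifies that $\{M^{+}\chi_{I_k}>\lambda\}=(b_k-\ell_k/\lambda,b_k)$ for $0<\lambda\leq 1$, so that
\[
\int_{\mathbb{R}}\Delta\, w\;=\;p\sum_{k=1}^{n}\int_{0}^{1}\lambda^{p-1}\,w\bigl((b_k-\ell_k/\lambda,b_k)\bigr)\,d\lambda.
\]
After rescaling we may assume $|I|=1$ and set $\beta:=\sum_k\ell_k$; when $\beta$ is bounded away from $0$ the conclusion is trivial, so assume $\beta$ small and partition the $\lambda$-integration into dyadic windows $\lambda\in(2^{-j-1},2^{-j}]$, $0\leq j\leq J:=\lfloor\log_2(1/\beta)\rfloor$, plus a tail for $\lambda\leq 2^{-J}$.

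In the $j$-th window the integrand is dominated by $2^{-jp}\sum_k w((b_k-2^{j+1}\ell_k,b_k))$. I would estimate this sum by (i) controlling the overlap multiplicity $N_j(x):=\#\{k\,:\,x\in(b_k-2^{j+1}\ell_k,b_k)\}$, which for disjoint $I_k$'s satisfies $N_j(x)\lesssim 2^j$ by a dyadic packing argument exploiting $\sum_{i\,:\,b_i\leq b_k}\ell_i\leq b_k-x$, and (ii) applying the hypothesis to the union $E_j:=\bigcup_k(b_k-2^{j+1}\ell_k,b_k)$, which has Lebesgue measure at most $2^{j+1}\beta$ and lives in a bounded enlargement of $I$. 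Choosing an envelope $(a,c)$ with $c-b$ comparable to $|I|$, so that $\int(M^{+}\chi_{(a,c)})^{p}w\lesssim\int(M^{+}\chi_I)^{p}w$ (using the remark following the definition of $C_p^+$ that the restriction $c-b<b-a$ is inessential), yields
\[
w(E_j)\lesssim\bigl(1+\log\tfrac{1}{2^j\beta}\bigr)^{-p}\int_{\mathbb{R}}(M^{+}\chi_I)^{p}w.
\]
Combining the multiplicity bound with this estimate and summing the dyadic windows produces
\[
\sum_{j\geq 0}2^{-j(p-1)}\bigl(1+\log\tfrac{1}{2^j\beta}\bigr)^{-p}\int_{\mathbb{R}}(M^{+}\chi_I)^{p}w,
\]
in which the multiplicity factor $2^j$ shifts the exponent from $-jp$ to $-j(p-1)$; a routine asymptotic analysis shows that the logarithmic gain saved by this shift yields a bound of order $(1+\log(1/\beta))^{-(p-1)}$, matching the claim. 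The tail $\lambda\leq 2^{-J}$ contributes $\lesssim\beta^p\int(M^{+}\chi_I)^{p}w$, which is negligible in comparison.

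The principal obstacles are: (a) establishing the packing bound $N_j\lesssim 2^j$, which requires a careful combinatorial argument exploiting the one-sided geometry together with the disjointness of the $I_k$'s; (b) arranging at each scale the envelope $(a,c)$ so that $\int(M^{+}\chi_{(a,c)})^{p}w$ is uniformly controlled by $\int(M^{+}\chi_I)^{p}w$, leveraging the introduction's remark on the irrelevance of the $c-b<b-a$ constraint; and (c) the asymptotic evaluation of the dyadic sum, which must collapse in such a way that exactly one logarithmic factor is consumed and the exponent $p-1$ emerges in the final bound.
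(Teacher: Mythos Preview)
Your approach has a genuine gap: the pointwise multiplicity bound $N_j(x)\lesssim 2^j$ is false. Take $I=(0,2)$ and $I_m=(2^{-m},2^{-m+1})$ for $m=1,\dots,M$; these are disjoint with $b_m=2^{-m+1}$, $\ell_m=2^{-m}$. For any $j\geq 1$ and $x=0^{+}$, the condition $b_m-x<2^{j+1}\ell_m$ reads $2^{-m+1}<2^{j+1-m}$, which holds for every $m$. Hence $N_j(0^{+})=M$, and $M$ is arbitrary. Your heuristic $\sum_{i:b_i\leq b_k}\ell_i\leq b_k-x$ only shows that the \emph{total length} of the contributing intervals is at most $b_{k_{\max}}-x<2^{j+1}\ell_{k_{\max}}$; it does not control the \emph{count}, which blows up when the $\ell_{k_i}$ decrease geometrically. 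Without (i) you cannot pass from $\sum_k w\bigl((b_k-2^{j+1}\ell_k,b_k)\bigr)$ to $2^{j}w(E_j)$, and the rest of the computation collapses.

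For contrast, the paper does not layer-cake each $M^{+}\chi_{I_k}$ separately. It first discards the far region $(-\infty,a-|I|)$ via the trivial bound $M^{+}\chi_{I_k}(x)\leq |I_k|/(c-x)$, and on the bounded enlargement $(a-|I|,c)$ it runs a level-set decomposition of $\Delta$ \emph{itself}. The decisive input (borrowed from Riveros--de~la~Torre) is the exponential decay
\[
\bigl|\{\Delta>\lambda\}\cap(a-|I|,c)\bigr|\leq B\,e^{-D\lambda}\,|(a-|I|,c)|,
\]
which encodes the joint structure of the sum $\sum_k(M^{+}\chi_{I_k})^p$ and is exactly what a term-by-term layer cake loses. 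The hypothesis is then applied, not to a single $E_j$, but to the pieces $\{\Delta>\lambda\}\cap(x_i,x_{i+1})$ coming from a geometric partition $x_0=a-|I|$, $x_{i+1}-x_i=c-x_{i+1}$, after which the resulting quantities $\int(M^{+}\chi_{(x_i,x_{i+2})})^p w$ sum back to $\int(M^{+}\chi_I)^p w$. Your outline could perhaps be repaired by inserting a further dyadic decomposition of the $I_k$ by length (so that within each length class the multiplicity bound \emph{does} hold), but as written step (i) fails.
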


\begin{proof}
Let $I=(a,c)$ be an interval and $\{I_{k}\}$ a family of disjoint
subintervals of $I$. First we note that 
\[
M^{+}(\chi_{I_{k}})(x)=0
\]
for each $x>c$. Then we have that 
\[
\int_{\mathbb{R}}\Delta w=\int_{-\infty}^{c}\Delta w=\int_{a'}^{c}\Delta w+\int_{-\infty}^{a'}\Delta w.
\]
where $a'=a-|I|$. First we deal with the second term. Observe that
if $d_{k}$ is the right endpoint of $I_{k}$, then
\begin{align*}
\int_{-\infty}^{a'}\Delta w & =\int_{-\infty}^{a'}\sum_{k=1}^{n}\left(M^{+}\chi_{I_{k}}\right)^{p}(x)w(x)dx\leq\int_{-\infty}^{a'}\sum_{k=1}^{n}\left(\frac{|I_{k}|}{d_{k}-x}\right)^{p}w(x)dx\\
 & \leq\int_{-\infty}^{a'}\left(\sum_{k=1}^{n}\frac{|I_{k}|}{d_{k}-x}\right)^{p}w(x)dx\lesssim\int_{-\infty}^{a'}\left(\sum_{k=1}^{n}\frac{|I_{k}|}{c-x}\right)^{p}w(x)dx\\
 & \leq\int_{-\infty}^{a}\left(\frac{|I|}{c-x}\right)^{p}w(x)dx\lesssim\int_{-\infty}^{c}(M^{+}\chi_{I})^{p}w(x)dx.
\end{align*}
Now we deal with the first term. Let $j$ be the least integer such
that 
\[
\log\left(\frac{\sum_{k=1}^{n}|I_{k}|}{|I|}\right)\leq j
\]
and $J$ the least integer such that for some $D\in(0,1)$ to be chosen
later,
\[
\log\left(\frac{1}{D}\log\left(\frac{e|I|}{\sum_{k=1}^{n}|I_{k}|}\right)\right)\leq J.
\]
Note that since $j\leq0$ and $J>0$ we have that $j<J$. Let 
\begin{align*}
Q & =\{\Delta(x)\leq e^{j}\}\\
S & =\{e^{j}<\Delta(x)\leq e^{J}\}\\
T & =\{\Delta(x)>e^{J}\}
\end{align*}
For $Q$ we have that
\begin{align*}
\int_{(a',c)\cap Q}\Delta(x)w(x)dx & \leq\int_{(a',c)\cap Q}e^{j}w(x)dx\\
 & =e\int_{(a',c)\cap Q}e^{j-1}w(x)dx\\
 & \leq e\int_{(a',c)\cap Q}e^{\log\left(\frac{\sum_{k=1}^{n}|I_{k}|}{|I|}\right)}w(x)dx\\
 & \leq e\frac{\sum_{k=1}^{n}|I_{k}|}{|I|}\int_{(a',c)}w(x)dx
\end{align*}
and the right hand side is bounded by the right hand side of (\ref{eq:Conclusion}).
We continue with $S$. 
\begin{align*}
\int_{(a',c)\cap S}\Delta(x)w(x)dx & \leq\sum_{k=j}^{J-1}e^{k+1}\int_{\{\Delta(x)>e^{k}\}\cap(a',c)}w(x)dx\\
 & =\sum_{k=j}^{J-1}e^{k+1}w(E_{e^{k}})
\end{align*}
where 
\[
E_{\lambda}=\{\Delta(x)>\lambda\}\cap(a',c).
\]
To continue with the argument, we borrow ideas from. \cite[p.  406]{RdT}.
We begin noting that there exists $B>1$ and that we can choose the
$D\in(0,1)$ above in such a way that
\[
|E_{\lambda}|\leq Be^{-D\lambda}|(a',c)|.
\]
At this point we use our hypothesis on the weight $w$. For that purpose
we split $(a',c)$ as follows. Let $x_{0}=a'$ and let us define recursively
$x_{i}-x_{i-1}=c-x_{i}$. Associated to the collection of intervals
$(x_{i},x_{i+1})$ we consider the sets 
\[
E_{\lambda}^{i}=E_{\lambda}\cap(x_{i},x_{i+1}).
\]
Observe that for each $i$ we may assume that the elements that we
consider in the sum $\Delta(x)$ are contained in $(x_{i},c)$ (the
remaining terms are zero). Hence we have that 
\[
|E_{\lambda}^{i}|\leq Be^{-D\lambda}|(x_{i},c)|=4Be^{-D\lambda}(x_{i+2}-x_{i+1}).
\]
Now we use the hypothesis for $x_{i}$, $x_{i+1}$, $x_{i+2}$ and
we have that
\begin{align*}
w(E_{\lambda}^{i}) & \lesssim\frac{1}{\left(1+\log^{+}\left(\frac{x_{i+2}-x_{i+1}}{|E_{\lambda}^{i}|}\right)\right)^{p}}\int_{-\infty}^{\infty}(M^{+}\chi_{(x_{i},x_{i+2})})^{p}w\\
 & \lesssim\frac{1}{\left(1+\log^{+}\left(\frac{1}{4Be^{-D\lambda}}\right)\right)^{p}}\int_{-\infty}^{\infty}(M^{+}\chi_{(x_{i},x_{i+2})})^{p}w.
\end{align*}
Summing in $i$ we have that it follows from the definition of the
partition $x_{i}$, which leads to a geometric series, that
\[
\sum_{i}\int_{-\infty}^{\infty}\left(M^{+}(\chi_{(x_{i},x_{i+2})})\right)^{p}w\leq C\int_{-\infty}^{\infty}\left(M^{+}\chi_{(a',c)}\right)^{p}w.
\]
Hence, 
\begin{align*}
w(E_{\lambda}) & \lesssim\frac{1}{\left(1+\log^{+}\left(\frac{1}{4Be^{-D\lambda}}\right)\right)^{p}}\int_{-\infty}^{\infty}\left(M^{+}\chi_{(a',c)}\right)^{p}w\\
 & \lesssim\frac{1}{\left(1+\log^{+}\left(\frac{1}{4Be^{-D\lambda}}\right)\right)^{p}}\int_{-\infty}^{\infty}\left(M^{+}\chi_{(a,c)}\right)^{p}w
\end{align*}
and we have that 
\begin{align*}
\int_{(a',c)\cap S}\Delta(x)w(x)dx & \leq\sum_{k=j}^{J}e^{k+1}w(E_{e^{k}})\\
 & \lesssim\sum_{k=j}^{J}\frac{1}{\left(1+\log^{+}\left(\frac{e^{De^{k}}}{4B}\right)\right)^{p}}e^{k+1}\int_{-\infty}^{\infty}\left(M^{+}\chi_{(a,c)}\right)^{p}w
\end{align*}
and it suffices to estimate the sum. We proceed as follows.
\begin{align*}
 & \sum_{k=j}^{J}\frac{1}{\left(1+\log^{+}\left(\frac{e^{De^{k}}}{4B}\right)\right)^{p}}e^{k+1}\\
 & \leq\sum_{k=j}^{\left\lfloor \log\left(\frac{1}{D}\log(4B)\right)\right\rfloor }e^{k+1}+\sum_{k=\left\lfloor \log\left(\frac{1}{D}\log(4B)\right)\right\rfloor +1}^{J}\frac{1}{\left(1+De^{k}-\log(4B)\right)^{p}}e^{k+1}\\
 & \lesssim e^{j}\frac{(1-e^{\log\left(\frac{1}{D}\log(4B)\right)})}{1-e}+\sum_{k=\left\lfloor \log\left(\frac{1}{D}\log(4B)\right)\right\rfloor +1}^{J}\frac{1}{\left(De^{k}+1-De^{\log\left(\frac{1}{D}\log(4B)\right)}\right)^{p}}e^{k+1}\\
 & \lesssim e^{j}\frac{\frac{1}{D}\log(4B)-1}{e-1}+\sum_{k=\left\lfloor \log\left(\frac{1}{D}\log(4B)\right)\right\rfloor +1}^{J}\frac{e}{c^{p}D^{p}e^{k(p-1)}}\\
 & \lesssim e^{j-1}\frac{1}{D}\log(4B)+\kappa e^{-J(p-1)}\\
 & \lesssim\frac{\sum_{k=1}^{n}|I_{k}|}{|I|}+\kappa\frac{1}{\left[\frac{1}{D}\log\left(\frac{e|I|}{\sum_{k=1}^{n}|I_{k}|}\right)\right]^{p-1}}
\end{align*}
and again this term is bounded by the right hand side of (\ref{eq:Conclusion}).

Finally, for $T$ we have that
\begin{align*}
\int_{T\cap(a',c)}\Delta(x)w(x)dx & \leq\sum_{i=J}^{\infty}e^{i+1}\int_{\{\Delta(x)>e^{i}\}\cap(a',c)}w(x)dx\\
 & \lesssim\sum_{i=J}^{\infty}e^{i+1}\frac{1}{\left(1+\log^{+}\left(\frac{1}{4Be^{-De^{i}}}\right)\right)^{p}}\int_{-\infty}^{\infty}\left(M^{+}\chi_{(a',c)}\right)^{p}w\\
 & \lesssim\sum_{i=J}^{\infty}\frac{1}{e^{i(p-1)}}\int_{-\infty}^{\infty}\left(M^{+}\chi_{(a,c)}\right)^{p}w\\
 & \lesssim\frac{1}{e^{J(p-1)}}\int_{-\infty}^{\infty}\left(M^{+}\chi_{(a,c)}\right)^{p}w\\
 & \lesssim\frac{1}{\left[\log\left(\frac{e|I|}{\sum_{k=1}^{n}|I_{k}|}\right)\right]^{p-1}}\int_{-\infty}^{\infty}\left(M^{+}\chi_{(a,c)}\right)^{p}w.
\end{align*}
\end{proof}
Armed with the preceding Lemma we can finally settle Lemma \ref{lem:equivCp}.
\begin{proof}[Proof of Lemma \ref{lem:equivCp}]
Let $I=(a,c)$ be an interval. Let $\delta>0$ such that if $\sum|I_{k}|\leq2\delta|I|$
then
\begin{equation}
\int_{\mathbb{R}}\Delta w\leq\frac{1}{2}\int_{\mathbb{R}}(M^{+}\chi_{I})^{p}w.\label{eq:CondDelta}
\end{equation}
Now assume that $a<b<c$ where $c-b<b-a$ and let $E\subset(a,b)$
be a measurable set. Let $n$ be the least integer such that $\delta^{n}|(a,b)|<|E|.$
Now we let $E_{j}=\{x\,:M^{+}(\chi_{E})(x)>\delta^{j}\}$ for $1\leq j\leq n$.
Let $J_{i}^{j}$ be the component intervals of $E_{j}$ and $\Delta_{j}(x)=\sum_{i}M^{+}(\chi_{J_{i}^{j}})(x){}^{p}$.
We claim that for $2\leq j\leq n$
\[
\int_{\mathbb{R}}\Delta_{j-1}(x)w(x)dx\leq\frac{1}{2}\int_{\mathbb{R}}\Delta_{j}(x)w(x)dx.
\]
 Assume by now that the claim holds. Note that since $\text{\ensuremath{\chi_{E}(x)\leq\Delta_{1}(x)}}$
then
\[
w(E)\leq\int_{\mathbb{R}}\Delta_{1}(x)w(x)dx
\]
and iterating the preceding inequality, 
\[
\int_{\mathbb{R}}\Delta_{1}(x)w(x)dx\leq\frac{1}{2^{n-1}}\int_{\mathbb{R}}\Delta_{n}(x)w(x)dx
\]
and consequently
\[
w(E)\leq2\frac{1}{2^{n}}\int_{\mathbb{R}}\Delta_{n}(x)w(x)dx.
\]
Note that by the definition of $n$, and taking into account that
$b-c<b-a$, 
\begin{align*}
\delta^{n}|(a,b)|<|E| & \iff\frac{1}{2^{n(-\log_{2}\delta)}}<\frac{|E|}{|(a,b)|}\\
 & \iff\frac{1}{2^{n}}<\left(\frac{|E|}{|(a,b)|}\right)^{\frac{1}{-\log_{2}\delta}}\leq\left(\frac{|E|}{|(b,c)|}\right)^{\frac{1}{-\log_{2}\delta}}
\end{align*}
so if we can show that 
\begin{equation}
\int_{\mathbb{R}}\Delta_{n}(x)w(x)dx\lesssim\int_{\mathbb{R}}(M^{+}\chi_{(a,c)}(x))^{p}w(x)dx\label{eq:LastStep}
\end{equation}
then
\[
w(E)\lesssim2\left(\frac{|E|}{|(b,c)|}\right)^{\frac{1}{-\log_{2}\delta}}\int_{\mathbb{R}}(M^{+}\chi_{(a,c)})^{p}w
\]
and we would be done.\\
Let us settle (\ref{eq:LastStep}). Observe that since 
\[
\frac{|E\cap(a,b)|}{|(a,b)|}=\frac{|E|}{|(a,b)|}>\delta^{n},
\]
we have that $a\in J_{i}^{n}$ for some component $J_{i}^{n}$ of
$E_{n}$. Let us call $J_{i}^{n}=(a',b')$. Observe that since $a\in J_{i}^{n}$
then $a'<a$. Observe that $M^{+}\chi_{E}(a')=\delta^{n}$ and since
$(a',b')$ is a component, for some $\varepsilon>0$, we have that
if $x\in[a'-\varepsilon,a')$, then $M^{+}\chi_{E}(x)\leq\delta^{n}.$
Note that since $E\subset(a,b)$ and $a>a'$ this yields that for
every $x<a'-\varepsilon$
\[
M^{+}\chi_{E}(x)\leq M^{+}\chi_{E}(a'-\varepsilon)\leq\delta^{n}.
\]
This yields that all the connected components are contained in the
interval $(a',b''),$ for some $b''\leq b$, since $M^{+}\chi_{E}(x)=0$
for every $x>b$. Now, that by the weak type $(1,1)$ of $M^{+}$,
combined with the definition of $n$, 
\begin{align*}
\sum_{i}|J_{i}^{n}| & =\left|\left\{ x\in\mathbb{R}\,:\,M^{+}\chi_{E}(x)>\delta^{n}\right\} \right|\leq\frac{1}{\delta^{n}}|E|\\
 & =\frac{1}{\delta}\frac{1}{\delta^{n-1}}\frac{|E|}{|(a,b)|}|(a,b)|\leq\frac{1}{\delta}|(a,b)|.
\end{align*}
Observe that since $(a',b')$ is some component $J_{i}^{n}$ in $E_{n}$
then
\[
a-a'\leq b'-a'\leq\sum_{i}|J_{i}^{n}|\leq\frac{1}{\delta}|(a,b)|.
\]
Consequently
\begin{align*}
b-a' & =b-a+a-a'\\
 & \leq b-a+\frac{1}{\delta}(b-a)=\left(1+\frac{1}{\delta}\right)(b-a).
\end{align*}
Since all the intervals $J_{i}^{n}$ are contained in $(a',b)$, by
(\ref{eq:Conclusion}) we have that 
\begin{align*}
\int_{\mathbb{R}}\Delta w & \lesssim\frac{1}{\left(1+\log\left(\frac{|(a',b)|}{\sum_{i}|J_{i}^{n}|}\right)\right)^{p-1}}\int_{\mathbb{R}}M^{+}(\chi_{(a',b)})^{p}w.\\
 & \leq\int_{\mathbb{R}}(M^{+}\chi_{(a,b)})^{p}w\leq\int_{\mathbb{R}}(M^{+}\chi_{(a,c)})^{p}w
\end{align*}
We end the proof of (\ref{eq:LastStep}) just noting that 
\[
M^{+}(\chi_{(a'',b)})\lesssim M^{+}(\chi_{(a,c)})
\]
since taking into account that $|(a'',b)|\leq\frac{1}{\delta}\left(\frac{1}{\delta}+1\right)|(a,b)|$
yields that $M^{+}(\chi_{(a'',b)})\simeq M^{+}(\chi_{(a,b)})$.

Since as we have just shown (\ref{eq:LastStep}) holds, we are left
with settling the claim. We argue as follows. Let $H$ be a component
of $E_{j}$. Note that, since $H$ is a component
\begin{equation}
\frac{|H\cap E|}{|H|}=\delta^{j}.\label{eq:HcapE/H}
\end{equation}
Our next step is to show that
\begin{equation}
H\cap E_{j-1}=\{M^{+}\chi_{H\cap E}>\delta^{j-1}\}.\label{eq:HEj-1}
\end{equation}
First, we observe that the components of $E_{j-1}$ are contained
in the components of $E_{j}$. Hence 
\[
H\cap E_{j-1}=\bigcup I_{k}^{j-1}
\]
where the $I_{k}^{j-1}$ are the components of $E_{j-1}$ contained
in $H$. Then we have that if $x\in H\cap E_{j-1}$ then $x\in I_{k}^{j-1}=(\alpha,\beta)$
for some $k$. Since $x\in E_{j-1}$, then $M^{+}(\chi_{E})(x)>\delta^{j-1}$
and 
\[
\frac{1}{|(x,\beta)|}\int_{x}^{\beta}\chi_{E}>\delta^{j-1}.
\]
Observe that $(x,\beta)\subset I_{k}^{j-1}\subset H$, so 
\[
\frac{1}{|(x,\beta)|}\int_{x}^{\beta}\chi_{E\cap H}>\delta^{j-1}.
\]
Consequently 
\[
M^{+}\chi_{H\cap E}(x)>\delta^{j-1}
\]
and this yields $\{M^{+}\chi_{H\cap E}>\delta^{j-1}\}\supset H\cap E_{j-1}$.

Now we prove the converse inclusion. Observe that if $M^{+}\chi_{H\cap E}(x)>\delta^{j-1}$
then 
\[
M^{+}\chi_{E}(x)\geq M^{+}\chi_{H\cap E}(x)>\delta^{j-1}
\]
and consequently $x\in E_{j-1}$. Now we have to see that $x\in H$.
Let us call $H=(d,e)$. Observe that $M^{+}\chi_{H\cap E}(x)=0$ for
every $x>e$, and hence $x\not\in\{M^{+}\chi_{H\cap E}>\delta^{j-1}\}$.
On the other hand, observe that since $H$ is a component of $E_{j}$
there exist some $\varepsilon<0$ such that if $x\in[d-\varepsilon,d]$
then $M^{+}\chi_{E}(x)\leq\delta^{j}$. Relying upon this fact, note
that if $x<d$ then we have that if $x\in[d-\varepsilon,d]$, then
$M^{+}\chi_{H\cap E}(x)\leq M^{+}\chi_{E}(x)\leq\delta^{j}$ and consequently
$x\not\in\{M^{+}\chi_{H\cap E}>\delta^{j-1}\}$ and if $x<d-\varepsilon$
then, $M^{+}\chi_{H\cap E}(x)\leq M^{+}\chi_{H\cap E}(d-\varepsilon)\leq\delta^{j}$
and also $x\not\in\{M^{+}\chi_{H\cap E}>\delta^{j-1}\}$. Hence 
\[
\{M^{+}\chi_{H\cap E}>\delta^{j-1}\}\subset H
\]
and we are done.

The weak type $(1,1)$ of $M^{+}$ combined with (\ref{eq:HEj-1})
yields
\[
|H\cap E_{j-1}|\leq\delta^{1-j}|E\cap H|
\]
and combining this with (\ref{eq:HcapE/H}) we have that
\[
|H\cap E_{j-1}|\leq\delta|H|.
\]
If we denote 
\[
\Delta_{\mathcal{H}}(x)=\sum_{I_{k}\in\mathcal{H}}(M^{+}\chi_{I_{k}}(x))^{p}
\]
where $\mathcal{H}$ is the set of component intervals in $H\cap E_{j-1}$,
we have that
\[
\sum_{I_{k}\in\mathcal{H}}|I_{k}|\leq|H\cap E_{j-1}|\leq\delta|H|.
\]
By the definition of $\delta$ and (\ref{eq:CondDelta}) we have that
\[
\int_{-\infty}^{\infty}\Delta_{\mathcal{H}}(x)w(x)dx\leq\frac{1}{2}\int_{-\infty}^{\infty}M^{+}(\chi_{H})^{p}w(x)dx.
\]
Adding those inequalities for all the components $H$ of $E_{j}$
gives
\[
\int_{\mathbb{R}}\Delta_{j-1}(x)w(x)\leq\frac{1}{2}\int_{\mathbb{R}}\Delta_{j}(x)w(x)dx
\]
for $2\leq j\leq n$ as we wanted to show. This ends the proof of
the Theorem.
\end{proof}

\section*{Acknowledgements}

The first and the second authors were partially supported by Ministerio
de Economía y Competitividad, Spain, grant PGC2018-096166-B-100 and
by Junta de Andalucía grant FQM-354.

The third author was supported as well by Agencia I+D+i grants PICT
2018-02501 and PICT 2019-00018.

All the authors were partially supported by Junta de Andalucía UMA18-FEDERJA-002.

\appendix

\section{Cotlar type inequalities}

Since we have not been able to find the following Cotlar type inequality
in the one-sided setting in the literature, we provide here a proof
for reader's convenience.
\begin{thm}
\label{thm:CotlarIneq}Let $A$ be a Young function. Let $T^{+}$
be a one-sided singular integral operator with associated kernel $K\in\mathcal{H}_{\bar{A}}$.
Then 
\[
(T^{+})^{*}f(x)\lesssim M_{\delta}^{+}(T^{+}f)(x)+M_{A}^{+}f(x)\qquad\delta\in(0,1).
\]
\end{thm}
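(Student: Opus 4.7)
The plan is to adapt the classical Cotlar argument to the one-sided setting, controlling the kernel-truncation error via the $L^{\bar{A}}$-H\"ormander condition together with the generalised H\"older inequality in Orlicz spaces. Fix $x\in\mathbb{R}$ and $\varepsilon>0$, and pick any $z$ with $h:=z-x\in(0,\varepsilon/(2c_{\bar{A}}))$. Splitting $T^{+}f(z)=\int_{z}^{\infty}K(z-y)f(y)\,dy$ at $y=x+\varepsilon$, and recognising that $\int_{z}^{x+\varepsilon}K(z-y)f(y)\,dy=T^{+}(f\chi_{(x,x+\varepsilon)})(z)$ (which is legitimate because $K$ is supported in $\mathbb{R}^{-}$ and $z>x$), yields the Cotlar-type identity
\[
T_{\varepsilon}^{+}f(x)=T^{+}f(z)-T^{+}(f\chi_{(x,x+\varepsilon)})(z)+\int_{x+\varepsilon}^{\infty}\bigl[K(x-y)-K(z-y)\bigr]f(y)\,dy.
\]

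The last term is the H\"ormander error. After the substitution $w=y-x$ it becomes $\int_{\varepsilon}^{\infty}[K(-w)-K(h-w)]f(x+w)\,dw$; a dyadic decomposition on $A_{m}=(2^{m}\varepsilon,2^{m+1}\varepsilon]$ followed by generalised H\"older with the Young pair $(A,\bar{A})$ taken on the ambient ball $B(0,2^{m+1}\varepsilon)$ dominates it by
\[
\lesssim\sum_{m\ge1}2^{m+1}\varepsilon\,\bigl\|(K(-\cdot)-K(h-\cdot))\chi_{A_{m}}\bigr\|_{\bar{A},B(0,2^{m+1}\varepsilon)}\,\bigl\|f(x+\cdot)\chi_{A_{m}}\bigr\|_{A,B(0,2^{m+1}\varepsilon)}.
\]
The $A$-factor is bounded, after translating and enlarging the averaging interval to $(x,x+2^{m+1}\varepsilon)$, by a constant times $M_{A}^{+}f(x)$, while the summation in $m$ of the $\bar{A}$-factors is dominated by $C_{\bar{A}}$ via the $L^{\bar{A}}$-H\"ormander condition with radius $R=\varepsilon$ and shift $h$ (admissible because $|h|<R/c_{\bar{A}}$). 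Hence this error is $\lesssim M_{A}^{+}f(x)$.

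Fix now $\delta\in(0,1)$ and raise the Cotlar identity to the $\delta$-th power, using $(a+b+c)^{\delta}\le a^{\delta}+b^{\delta}+c^{\delta}$, to obtain the pointwise inequality
\[
|T_{\varepsilon}^{+}f(x)|^{\delta}\le|T^{+}f(z)|^{\delta}+|T^{+}(f\chi_{(x,x+\varepsilon)})(z)|^{\delta}+C\bigl(M_{A}^{+}f(x)\bigr)^{\delta},
\]
valid uniformly for $z\in I:=(x,x+\varepsilon/(2c_{\bar{A}}))$. Averaging in $z$ over $I$ bounds the first right-hand term by $(M_{\delta}^{+}(T^{+}f)(x))^{\delta}$ by definition; for the second, Kolmogorov's inequality applied via the weak $(1,1)$ boundedness of $T^{+}$ (which holds since $\mathcal{H}_{\bar{A}}\subset\mathcal{H}_{1}$) gives
\[
\frac{1}{|I|}\int_{I}|T^{+}(f\chi_{(x,x+\varepsilon)})|^{\delta}\lesssim\Bigl(\frac{1}{|I|}\int_{x}^{x+\varepsilon}|f|\Bigr)^{\delta}\lesssim\bigl(M^{+}f(x)\bigr)^{\delta}\le\bigl(M_{A}^{+}f(x)\bigr)^{\delta}.
\]
Taking the $1/\delta$-th power and then the supremum over $\varepsilon>0$ finishes the proof. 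The main technical obstacle is the H\"ormander step: one must carefully match the dyadic Orlicz-H\"older estimate to the hypothesis as stated (being attentive to the one-sided versus symmetric annuli and to doubling constants when passing from $A_{m}$ to $B(0,2^{m+1}\varepsilon)$); the use of weak $(1,1)$ for $T^{+}$ in the Kolmogorov step rests on a standard result for $K\in\mathcal{H}_{1}$.
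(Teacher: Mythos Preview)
Your argument is correct and matches the paper's proof essentially line for line: the same Cotlar splitting at $y=x+\varepsilon$, the same Kolmogorov/weak-$(1,1)$ treatment of the local piece $T^{+}(f\chi_{(x,x+\varepsilon)})$, and the same dyadic Orlicz--H\"older estimate invoking $K\in\mathcal{H}_{\bar A}$ for the kernel-difference term. One bookkeeping slip: with $A_{m}=(2^{m}\varepsilon,2^{m+1}\varepsilon]$ and the sum starting at $m\ge1$ you miss the shell $(\varepsilon,2\varepsilon]$; take $R=\varepsilon/2$ (as the paper does) so that the H\"ormander sum from $m=1$ begins at $\varepsilon$ --- this is precisely consistent with your already-correct constraint $|h|<\varepsilon/(2c_{\bar A})=R/c_{\bar A}$.
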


Observe that if $T^{+}$ is a one-sided Calderón-Zygmund operator
its associated kernel is in particular a $\mathcal{H}_{\infty}$ kernel
and hence this result covers that case, with $M_{A}^{+}$ replaced
by $M^{+}$.
\begin{proof}[Proof of Theorem \ref{thm:CotlarIneq}]
Observe that it suffices to show that for every $\varepsilon>0$
\[
|T_{\varepsilon}^{+}f(0)|\lesssim M_{A}^{+}f(0)+M_{\delta}^{+}(T^{+}f)(0)
\]
where 
\[
T_{\varepsilon}^{+}f(0)=\int_{\varepsilon}^{\infty}K(0-y)f(y)dy.
\]
Observe that for $x>0$ we can write
\begin{align*}
T_{\varepsilon}^{+}f(0) & =T_{\varepsilon}^{+}f(0)-T^{+}f(x)+T^{+}f(x)\\
 & =T_{\varepsilon}^{+}f(0)-T^{+}f_{2}(x)-T^{+}f_{1}(x)+T^{+}f(x)
\end{align*}
where $f_{1}(x)=f(x)\chi_{(0,\varepsilon)}(x)$ and $f_{2}(x)=f(x)\chi_{(\varepsilon,\infty)}(x)$.
Then we have that
\[
\left|T_{\varepsilon}^{+}f(0)\right|\leq\left|T^{+}f_{1}(x)\right|+\left|T^{+}f(x)\right|+\left|T_{\varepsilon}^{+}f(0)-T^{+}f_{2}(x)\right|
\]
and $\delta$-averaging over $x$, if we call $h=\frac{\varepsilon}{2c_{A}}$
\begin{align*}
 & \left|T_{\varepsilon}^{+}f(0)\right|\\
 & \lesssim\left(\frac{1}{h}\int_{0}^{h}\left|T^{+}f_{1}(x)\right|^{\delta}dx\right)^{\frac{1}{\delta}}+\left(\frac{1}{h}\int_{0}^{h}\left|T^{+}f(x)\right|^{\delta}dx\right)^{\frac{1}{\delta}}+\left(\frac{1}{h}\int_{0}^{h}\left|T_{\varepsilon}^{+}f(0)-T^{+}f_{2}(x)\right|^{\delta}\right)^{\frac{1}{\delta}}\\
 & :=I+II+III.
\end{align*}
For $I$, we observe that any Hörmander condition implies that $K\in\mathcal{H}_{1}$
and hence by \cite{G}, we have that $T^{+}$ is of weak type $(1,1)$.
Then, by Kolmogorov inequality, 
\[
I\lesssim\frac{1}{h}\int_{\mathbb{R}}\left|f_{1}(x)\right|dx=\frac{2c_{A}}{\varepsilon}\int_{0}^{\varepsilon}\left|f(x)\right|dx\leq2c_{A}M^{+}f(0).
\]
For $II$
\[
II\leq M_{\delta}^{+}(T^{+}f)(0),
\]
and it remains to deal with $III.$ Observe that for $x\in(0,\frac{\varepsilon}{2c_{A}})$
\begin{align*}
T^{+}f_{2}(x) & =\lim_{\delta\rightarrow0}\int_{\delta+x}^{\infty}K(x-y)f_{2}(y)dy\\
 & =\int_{\varepsilon}^{\infty}K(x-y)f(y)dy=T_{\varepsilon}^{+}f(x).
\end{align*}
Bearing that in mind and since for every $x\in(0,\frac{\varepsilon}{2c_{A}})$
we have that $x<\frac{\varepsilon}{2c_{A}}$ or equivalently $xc_{A}<\frac{\varepsilon}{2}$,
we have that
\begin{align*}
 & \left|T_{\varepsilon}^{+}f(0)-T^{+}f_{2}(x)\right|\\
= & \left|T_{\varepsilon}^{+}f(0)-T_{\varepsilon}^{+}f(x)\right|\\
= & \left|\int_{\varepsilon}^{\infty}\left(K(-y)-K(x-y)\right)f(y)dy\right|\\
\leq & \int_{\varepsilon}^{\infty}\left|K(-y)-K(x-y)\right||f(y)|dy\\
\leq & \sum_{m=1}^{\infty}(2^{m}\frac{\varepsilon}{2})\frac{1}{(2^{m}\frac{\varepsilon}{2})}\int_{2^{m}\frac{\varepsilon}{2}<y\leq2^{m+1}\frac{\varepsilon}{2}}^{\infty}\left|K(-y)-K(x-y)\right||f(y)|dy\\
\lesssim & \sum_{m=1}^{\infty}(2^{m}\frac{\varepsilon}{2})\|(K(-\cdot)-K(x-\cdot))\chi_{2^{m}\frac{\varepsilon}{2}<y\leq2^{m+1}\frac{\varepsilon}{2}}\|_{\bar{A},(0,2^{m+1}\frac{\varepsilon}{2})}\|f\|_{A,(0,2^{m+1}\frac{\varepsilon}{2})}\\
\lesssim & C_{A}M_{A}^{+}f(0),
\end{align*}
and consequently
\[
III\lesssim C_{A}M_{A}^{+}f(0).
\]
This ends the proof.
\end{proof}

\end{document}